\newcommand{\sysn}{\left\{\begin{array}{rcl}}
\newcommand{\sysk}{\end{array}\right.}
\newtheorem{theorem}{Theorem}[section]
\newtheorem{proposition}[theorem]{Proposition}
\theoremstyle{definition}
\newtheorem{corollary}[theorem]{Corollary}
\journal{...}
\begin{document}

\begin{frontmatter}

%% Title, authors and addresses

%% use the tnoteref command within \title for footnotes;
%% use the tnotetext command for the associated footnote;
%% use the fnref command within \author or \address for footnotes;
%% use the fntext command for the associated footnote;
%% use the corref command within \author for corresponding author footnotes;
%% use the cortext command for the associated footnote;
%% use the ead command for the email address,
%% and the form \ead[url] for the home page:
%%
%%\title{Topological-Algebraic Properties of Function Space with Set-Open Topology\tnoteref{label1}}
%%\tnotetext[label1]{}
%%\author{Alexander V. Osipov\corref{cor1}\fnref{label2}}
%%\ead{OAB@list.ru}
%% \ead[url]{home page}
%% \fntext[label2]{}
%% \cortext[cor1]{}
%% \address{Address\fnref{label3}}
%% \fntext[label3]{}

%\title{Solution of a Problem of "Scottish Book" \\ for Banach spaces}

\title{Note on the Banach Problem 1 of condensations of Banach spaces onto compacta}

%% use optional labels to link authors explicitly to addresses:
%% \author[label1,label2]{<author name>}
%% \address[label1]{<address>}
%% \address[label2]{<address>}

%\author[label1]{Taras Banakh}

%\ead[label1]{...}

%\tnotetext[label1]{The research has been supported by .}

%\address[label1]{Ivan Franco National University of Lviv, Ukraine, and Jan Kochanowski University in Kielce, Poland}

\author{Alexander V. Osipov}

\ead{OAB@list.ru}

%\tnotetext[label1]{The research has been supported by .}

\address{Krasovskii Institute of Mathematics and Mechanics, Ural Federal
 University, Ural State University of Economics, Yekaterinburg, Russia}

\begin{abstract}

It is consistent with any possible value of the continuum
$\mathfrak{c}$ that every infinite-dimensional Banach space of
density $\leq \mathfrak{c}$ condenses onto the Hilbert cube.

Let $\mu<\mathfrak{c}$ be a cardinal of uncountable cofinality. It
is consistent that the continuum be arbitrary large, no Banach
space $X$ of density $\gamma$, $\mu<\gamma<\mathfrak{c}$,
condenses onto a compact metric space, but any Banach space of
density $\mu$ admits a condensation onto a compact metric space.
In particular, for $\mu=\omega_1$, it is consistent that
$\mathfrak{c}$ is arbitrarily large, no Banach space of density
$\gamma$, $\omega_1<\gamma<\mathfrak{c}$, condenses onto a compact
metric space.

These results imply a complete answer to the Problem 1 in the
Scottish Book for Banach spaces: When does a Banach space $X$
admit a bijective continuous mapping onto a compact metric space?
\end{abstract}

\begin{keyword}

%% keywords here, in the form: keyword \sep keyword

%% MSC codes here, in the form: \MSC code \sep code

\MSC[2020] 57N17 \sep 57N20 \sep 54C10 \sep 54E99

\end{keyword}

\end{frontmatter}

%%
%% Start line numbering here if you want
%%
% \linenumbers

%% main text

\section{Introduction}

 The following problem is a reformulation of the well-known
 problem of Stefan Banach from the Scottish Book:

{\bf Banach Problem.} When does a metric (possibly Banach) space
$X$ admit a condensation (i.e. a bijective continuous mapping)
onto a compactum (= compact metric space) ?

M. Katetov \cite{kat} was one of the first who attacked the Banach
problem. He proved that: a countable regular space has a
condensation onto a compactum if, and only if, it is scattered (a
space is said to be {\it scattered} if every nonempty subset of it
has an isolated point).
\medskip

Recall that a topological space is {\it Polish} if $X$ is
homeomorphic to a separable complete metric space and a
topological space $X$ is {\it $\sigma$-compact} if $X$ is a
countable union of compact subsets.

In 1941, A.S. Parhomenko \cite{parh2} constructed a example of a
$\sigma$-compact Polish space $X$ such that $X$ does not have a
condensation onto a compact space.

Replacing \cite{kad} in this proof by Kadets' homeomorphism
theorem \cite{kad1} implies that every infinite-dimensional
separable Banach space condenses onto the Hilbert cube.

\medskip

Recall that a space $X$ is called {\it absolute Borel}, if $X$ is
homeomorphic to a Borel subset of some complete metrizable space.

\medskip

In 1976, E.G. Pytkeev \cite{pytk} proved the following remarkable
theorem for separable absolute Borel non-$\sigma$-compact spaces.

\begin{theorem} Every separable absolute Borel space
$X$ condenses onto the Hilbert cube, whenever $X$ is not
$\sigma$-compact.
\end{theorem}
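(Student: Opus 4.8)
The plan is to reduce the statement to a tree construction carried out inside $Q$. Since $X$ is separable metrizable it embeds into the Hilbert cube $Q=[0,1]^{\omega}$, and since $X$ is absolute Borel this embedding presents $X$ as a Borel --- hence analytic --- subspace of $Q$. Because $X$ is not $\sigma$-compact, the Hurewicz dichotomy, in the form valid for analytic (in particular Borel) spaces, produces a subspace $P\subseteq X$ that is closed in $X$ and homeomorphic to the Baire space $\mathbb{N}^{\omega}$. This copy of $\mathbb{N}^{\omega}$ supplies all the flexibility: via $\mathbb{N}^{\omega}\cong(\mathbb{N}^{\omega})^{\omega}$ it splits into as many closed-in-$X$ copies of itself as one wishes, and it carries a continuous surjection onto any prescribed compact metric cell all of whose point-preimages are again copies of $\mathbb{N}^{\omega}$. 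Note also that $U:=X\setminus P$ is open in $X$, hence itself separable and absolute Borel.

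The construction I would run builds, by recursion on the levels of the tree $\mathbb{N}^{<\omega}$, a Luzin-type scheme $(C_{t})$ of nonempty closed subsets of $X$ --- with $C_{\langle\rangle}=X$, the sets $C_{t^{\smallfrown}i}$ ($i\in\mathbb{N}$) pairwise disjoint with union $C_{t}$ and with closures contained in $C_{t}$, and $\mathrm{diam}\,C_{t}\to 0$ along every branch --- matched level by level with a scheme $(B_{t})$ of ``half-open boxes'' in $Q$: sets $\prod_{i}J_{i}$ in which all but finitely many $J_{i}$ equal $[0,1]$ and the remaining $J_{i}$ are proper subintervals of $[0,1]$, each either half-open or a single point, again with $B_{\langle\rangle}=Q$, the $B_{t^{\smallfrown}i}$ pairwise disjoint with union $B_{t}$, and $\mathrm{diam}\,B_{t}\to 0$. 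The copy $P$ drives the branching (its branches carry the whole tree), while $U$ is absorbed recursively into the $C_{t}$, the two being kept disjoint branch by branch. Setting $f(x)$ to be the unique point of $\bigcap_{n}\overline{B_{x\restriction n}}$, one verifies: $f$ is well defined and continuous, because along each branch $\overline{B_{x\restriction n}}$ is a decreasing sequence of compacta whose diameters tend to $0$; $f$ is surjective, because the $B_{t}$ partition $Q$ at every level and (using completeness of $P$) every branch of the tree is realized by a point of $X$; and $f$ is injective, because the children $B_{t^{\smallfrown}i}$ are genuinely disjoint and the half-open-box discipline --- the Hilbert-cube analogue of writing an interval as a countable disjoint union of half-open subintervals together, where needed, with a face --- guarantees that the closures $\overline{B_{t^{\smallfrown}i}}$, which do abut, are never attained along two distinct branches.

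I expect the main obstacle to be the simultaneous control of injectivity, surjectivity and shrinking of diameters in a space that is \emph{not} compact. A decreasing sequence of nonempty closed subsets of $X$ may fail to have a common point, so one cannot merely arrange that the fibres $f^{-1}(q)$ shrink to points; instead one must thread through the scheme a decreasing tower of copies of $\mathbb{N}^{\omega}$ of shrinking diameter, so that each point of $Q$ is attained along exactly one branch that is genuinely realized in $X$ --- a branch that is then spent on $P$ or on $U$, never on both. Two further points need care: the combinatorial lemma that a half-open box in $Q$ can be split into countably many half-open boxes of arbitrarily small diameter (here one exploits that a face $\{0\}\times[0,1]^{\omega}$ is again a copy of $Q$, so a piece that degenerates in one coordinate is still large), and the recursion absorbing $U$, which may itself be non-$\sigma$-compact and hence appeals to the theorem again --- best organized by induction on the Borel class of $X$, or by folding all of the subsidiary recursions into the single scheme above. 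The remaining ingredients --- the Hurewicz dichotomy, $\mathbb{N}^{\omega}\cong(\mathbb{N}^{\omega})^{\omega}$, the Tietze extension of $[0,1]$-valued maps, and the compactness of $Q$ --- are standard; the difficulty is the bookkeeping.
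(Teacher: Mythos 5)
First, note that the paper does not prove this statement at all: it is Pytkeev's theorem, quoted from \cite{pytk} (and the separable case of the Banach problem is then handled by citation). So there is no in-paper proof to compare against, and your proposal has to stand on its own. As it stands, it does not: the ingredients you name (the Hurewicz dichotomy giving a closed copy $P$ of $\mathbb{N}^{\omega}$, a Luzin scheme in $X$ matched with a half-open-box scheme partitioning $Q$) are the right ones, but the two genuinely hard steps are exactly the ones your sketch passes over with incorrect justifications. The most serious is continuity. Shrinking diameters of the $\overline{B_{x\restriction n}}$ give only that $f$ is \emph{well defined}. For continuity you need: if $x\in C_s$ and points arbitrarily close to $x$ lie in a sibling $C_t$ at the same level, then $\overline{B_t}$ must be close to $\overline{B_s}$. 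The requirement $\overline{C_{t^{\smallfrown}i}}\subseteq C_t$ controls the parent--child relation but says nothing about siblings abutting one another, and the sets $C_t$, being closed with possibly empty interior, give no neighbourhood of $x$ mapping into a single box. Coordinating the adjacency structure of the partition of $X$ with that of the partition of $Q$ is the actual content of the theorem, and nothing in your construction addresses it.

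Two further gaps. For surjectivity you propose threading a decreasing tower of closed-in-$X$ copies of $\mathbb{N}^{\omega}$ with ambient diameters tending to $0$; but such a tower can have empty intersection, since the ambient metric restricted to a closed copy of $\mathbb{N}^{\omega}$ need not be complete (take $P$ to be the irrationals of $[0,1]$ and $P_n$ the irrationals of $[r-\frac1n,\,r+\frac1n]$ for a rational $r$: nested, closed in $P$, homeomorphic to $\mathbb{N}^{\omega}$, diameters $\to 0$, empty intersection). You must instead control a complete metric, or a complete sequence of covers, on the copies --- which is precisely where the absolute Borel hypothesis enters and where the real bookkeeping lives. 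Finally, the treatment of $U=X\setminus P$ is circular as organized: ``appeal to the theorem again'' for $U$ produces a condensation of $U$ onto all of $Q$, not onto the complement in $Q$ of the part covered by the image of $P$, so there is no gluing; $U$ may also be $\sigma$-compact, in which case the theorem does not apply to it; and the proposed ``induction on the Borel class'' is not set up, since $U$ is merely open in $X$ and its Borel rank in the ambient Polish space need not drop. What is missing is a combination lemma of Banakh--Plichko type (a decomposition $Q=A\sqcup B$ with $P$ condensing onto $B$ and $U$ condensing onto $A$, compatible with the box scheme), and proving that lemma is essentially proving the theorem.
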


\medskip

 The Pytkeev's result implies that every separable complete non-$\sigma$-compact metric space condenses onto the Hilbert cube.
Thus every separable complete linear metric space admits a
condensation onto a compactum.

It is well known that any locally compact admits a condensation
onto a compact space (Parhomenko's Theorem) \cite{parh2}. Hence,
each separable metrizable locally compact space (and thus each
finite-dimensional Banach space) condenses onto a compactum. Thus
every separable Banach space admits a condensation onto a
compactum.

The {\it density} $d(X)$ of a topological space $X$ is the
smallest cardinality of a dense subset of $X$. Since metrizable
compact spaces have cardinality at most continuum, every metric
space admitting a condensation onto a compactum has density at
most continuum.

T.Banakh and A.Plichko \cite{bp} proved the following interesting
result.

\begin{theorem}\cite{bp}\label{thBP1}
Every Banach space $X$ of density $\aleph_1$ or $\mathfrak{c}$
admits a condensation onto the Hilbert cube.
\end{theorem}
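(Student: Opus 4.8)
The plan is to treat the two cases $\mathrm{dens}(X)=\aleph_1$ and $\mathrm{dens}(X)=\mathfrak c$ separately, by different methods; the consistency statements recalled in the abstract show that no uniform argument can succeed for an arbitrary density, so some feature peculiar to each of these two values must be used. A common warm-up observation is that \emph{every} Banach space $X$ with $\mathrm{dens}(X)=\kappa\le\mathfrak c$ condenses onto a separable metrizable space: by Kowalsky's theorem $X$ embeds in the countable power $J(\kappa)^{\omega}$ of the $\kappa$-hedgehog, and after fixing an injection $\iota\colon\kappa\hookrightarrow 2^{\omega}\subseteq[0,1]^{\omega}$ the assignment $(\alpha,t)\mapsto(t\cdot\iota(\alpha),\,t)$ is a continuous injection of $J(\kappa)$ into $[0,1]^{\omega}\times[0,1]$ (continuous at the centre because $\|t\cdot\iota(\alpha)\|_{\infty}\le t$, injective because for $t>0$ one reads off $\iota(\alpha)$, hence $\alpha$); passing to the countable power and restricting gives a condensation of $X$ onto a separable metrizable space $Y$. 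The content of the theorem is therefore the second step --- making (a suitable choice of) $Y$ condense onto the Hilbert cube $Q$. By Pytkeev's theorem \cite{pytk} it is enough that $Y$ be absolute Borel and not $\sigma$-compact, and the latter is cheap: since $X$ is homeomorphic to $X\times\ell_2$, one may condense the $\ell_2$-factor onto $\mathbb R^{\omega}$ and take products, so that the target acquires an $\mathbb R^{\omega}$-factor and is no longer $\sigma$-compact. So the whole difficulty is to build the condensation onto $Y$ so that $Y$ comes out absolute Borel, and this is exactly what forces the restriction on the density.

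For $\mathrm{dens}(X)=\mathfrak c$ I would use that $|X|=\mathfrak c$. One replaces the coding above by a more rigid one: fixing a bijection $\iota\colon\mathfrak c\to 2^{\omega}$ and using (through the topological model $X\cong\ell_2(\mathfrak c)$, or an equivalent device) that every vector has countable support, send $x=(x_{\gamma})$ to the finite discrete signed measure $\mu_x=\sum_{\gamma} x_{\gamma}|x_{\gamma}|\,\delta_{\iota(\gamma)}$ on the Cantor set. A Cauchy--Schwarz estimate shows $x\mapsto\mu_x$ is continuous from the norm topology into the space $M(2^{\omega})$ of Radon measures with the weak$^*$ topology --- a $\sigma$-compact metrizable space --- and it is injective because the atom of $\mu_x$ at $\iota(\gamma)$ returns $x_{\gamma}$. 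I expect the technical obstacle here to be that the image is the set of purely atomic measures, which is only coanalytic in $M(2^{\omega})$; the coding must therefore be refined (or Pytkeev's theorem pushed slightly past the absolute Borel class) so that the separable metrizable image is genuinely absolute Borel. Granting that, composing with the resulting Pytkeev condensation produces $X\to Q$.

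For $\mathrm{dens}(X)=\aleph_1$ I would instead construct the condensation onto $Q$ directly, by transfinite recursion. Write $X=\bigcup_{\alpha<\omega_1}X_{\alpha}$ as an increasing continuous chain of closed infinite-dimensional separable subspaces; since $\omega_1$ is regular and $X$ is complete, this union already equals $X$. I would produce coherent condensations $h_{\alpha}\colon X_{\alpha}\to Q_{\alpha}\subseteq Q$ (that is, $h_{\beta}$ extends $h_{\alpha}$ for $\alpha<\beta$) with the $Q_{\alpha}$ increasing, each $Q_{\alpha}\subsetneq Q$, and $\bigcup_{\alpha<\omega_1}Q_{\alpha}=Q$; then $h=\bigcup_{\alpha}h_{\alpha}$ condenses $X$ onto $Q$. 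At a successor step the Bartle--Graves selection theorem splits $X_{\alpha+1}$ homeomorphically as $X_{\alpha}\times(X_{\alpha+1}/X_{\alpha})$; I would condense the separable quotient $X_{\alpha+1}/X_{\alpha}$ onto $Q$ and then enlarge $Q_{\alpha}$ to $Q_{\alpha+1}\subsetneq Q$, using absorption and collaring properties of the Hilbert cube (for which $Q\cong Q\times Q$ and the theory of $Z$-sets provide the room). At a limit $\lambda$ one sets $h_{\lambda}=\bigcup_{\alpha<\lambda}h_{\alpha}$, which so far is defined and continuous only on the dense subspace $\bigcup_{\alpha<\lambda}X_{\alpha}$ of $X_{\lambda}$, and extends it by uniform continuity to the norm completion.

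In this $\aleph_1$ construction the step I expect to be the main obstacle is the bookkeeping that keeps the coherent family \emph{injective}, above all at limit stages, because a uniformly continuous extension of an injection need not remain injective. One must therefore carry the $h_{\alpha}$ with control on the moduli of continuity of both $h_{\alpha}$ and $h_{\alpha}^{-1}$, and position the copies of $Q_{\alpha}$ so that their closures in $Q$ do not coalesce; simultaneously the chain $(Q_{\alpha})_{\alpha<\omega_1}$ must stay \emph{proper} yet exhaust $Q$ --- which is precisely what rules out the naive move of re-embedding $X_{\alpha}$ into an ever larger cube. This is also why the method is confined to towers of length $\omega_1$; together with the cardinality argument for $\mathfrak c$, it explains why precisely these two densities occur in the statement, in accordance with the consistent failure of the conclusion for densities strictly between $\aleph_1$ and $\mathfrak c$.
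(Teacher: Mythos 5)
There is a genuine gap --- in fact both halves of your argument are left open at exactly the point where the work is. In the $\mathfrak{c}$ case you concede that you do not know the image of $x\mapsto\mu_x$ to be absolute Borel, and without that Pytkeev's theorem does not apply; moreover the target $M(2^{\omega})$ with the weak$^*$ topology is not metrizable (no infinite-dimensional dual space is weak$^*$ first countable), so even the ambient space has to be replaced before the question of Borelness can be posed. In the $\aleph_1$ case the limit stage is not merely ``the main obstacle'': a condensation $h_\alpha\colon X_\alpha\to Q_\alpha$ is not uniformly continuous in general, and the repair you propose --- carrying a modulus of continuity for $h_\alpha^{-1}$ as well as for $h_\alpha$ --- is provably unavailable, since a uniform homeomorphism of $X_\alpha$ onto a subset of the compact cube would make the infinite-dimensional space $X_\alpha$ totally bounded. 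So the recursion cannot pass limit ordinals as described.

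Your guiding premise that ``no uniform argument can succeed'' for the two densities is also misleading: the paper's route (Theorem \ref{thBP}, from \cite{bp} and \cite{banakh}) \emph{is} uniform. One shows that if the real line admits a partition into $\kappa$ nonempty Borel sets, then every Banach space of density $\kappa$ (homeomorphic to $J(\kappa)^{\omega}$ by Toru\'{n}czyk's theorem \cite{tur}) condenses onto the Hilbert cube; what singles out $\aleph_1$ and $\mathfrak{c}$ is only that ZFC provides such partitions for these two cardinals --- into singletons for $\kappa=\mathfrak{c}$, and into the $\aleph_1$ Borel constituents of a non-Borel coanalytic set (Luzin--Sierpi\'{n}ski) for $\kappa=\aleph_1$ --- whereas for intermediate $\kappa$ the existence of the partition is independent, which is the content of Theorems \ref{thBO} and \ref{th3}. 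Your warm-up embedding of $J(\kappa)$ into $[0,1]^{\omega}\times[0,1]$ is correct and is in the spirit of that argument, but it only yields a condensation \emph{into} the Hilbert cube; the substance of Theorem \ref{thBP}, which you would still need to supply, is how a Borel partition of the line is used to make the image the whole cube.
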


\medskip

{\bf Question.} {\it What about intermediate densities between
$\aleph_1$ and $\mathfrak{c}$ ?}

\medskip

 It is clear that the answer to this question depends on a set theory model
independent of ZFC.

\medskip

In \cite{banakh}, T.Banakh announces the following results:

(1) It is consistent that the continuum is arbitrarily large and
every infinite-dimensional Banach space of density $\leq
\mathfrak{c}$ condenses onto the Hilbert cube $[0,1]^{\omega}$.

(2) It is consistent that the continuum is arbitrarily large and
{\bf no} Banach space of density $\aleph_1<d(X)<\mathfrak{c}$
condenses onto a compact metric space.

In this paper we give an independent proof of these results.

\section{Main results}

\begin{theorem}\label{thBP}(\cite{banakh}) If for some infinite
cardinal $\kappa$ there is  a partition of real line by $\kappa$
many Borel sets, then any Banach space of density $\kappa$
condenses onto the Hilbert cube.
\end{theorem}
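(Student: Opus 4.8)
The plan is to produce a continuous bijection of $X$ onto the Hilbert cube $Q=[0,1]^{\omega}$, organized around three reductions. If $\kappa\le\aleph_0$ then $X$ is separable, so it condenses onto $Q$ by the Kadets/Pytkeev results recalled in the Introduction; and if $\kappa>\mathfrak c$ the real line cannot be partitioned into $\kappa$ nonempty pieces, so the hypothesis is vacuous. Thus we may assume $\aleph_0<\kappa\le\mathfrak c$, and then $d(X)=\kappa$ together with $\dim X=\infty$ forces $|X|=\mathfrak c$ (a metric space of density $\kappa$ has cardinality at most $\kappa^{\aleph_0}\le\mathfrak c$, and an infinite-dimensional Banach space contains a line). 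Next, since an infinite-dimensional Banach space is never $\sigma$-compact (by the Baire category theorem, compact sets in such a space having empty interior), Pytkeev's theorem recalled above shows it is enough to construct a continuous bijection of $X$ onto a \emph{separable absolute Borel space that is not $\sigma$-compact}, i.e.\ onto a non-$\sigma$-compact Borel subset $B$ of a Polish space $Z$; composing with Pytkeev's condensation $B\to Q$ then finishes the proof. Finally, by the topological classification of infinite-dimensional Banach spaces (Anderson--Kadets for separable spaces, Toru\'nczyk in general), every such $X$ with $d(X)=\kappa$ is homeomorphic to the Hilbert space $\ell_2(\kappa)$, so it suffices to treat $X=\ell_2(\kappa)$; a reader preferring to avoid the classification can instead run the argument on a fixed $X$ along a transfinite filtration $X=\overline{\bigcup_{\alpha<\kappa}X_\alpha}$ by closed subspaces with one-dimensional (hence separable) increments and dense union.

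Here is where the partition enters. Fix $\mathbb R=\bigsqcup_{\alpha<\kappa}B_\alpha$ with each $B_\alpha$ Borel. Merging all the countable blocks into one block and re-splitting it, we may assume each $B_\alpha$ is an uncountable Borel subset of the open interval $(0,1)$; then each $B_\alpha$, with its relative Borel structure, is a standard Borel space, and these $\kappa$ standard Borel spaces sit \emph{pairwise disjointly} inside the single Polish space $(0,1)$, with union $(0,1)$. This disjointness is exactly the feature that lets one amalgamate $\kappa$ pieces of information into a single Borel set: the data recording the $\alpha$-th ``direction'' of a vector will be routed into the block $B_\alpha$, and because the blocks are disjoint the total image will land inside $(0,1)$ rather than overflow, while injectivity across directions becomes automatic.

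Concretely, write $\ell_2(\kappa)=\bigcup_{\alpha<\kappa}H_\alpha$ with $H_\alpha=\overline{\mathrm{span}}\{e_\beta:\beta<\alpha\}$; for uncountable regular $\kappa$ this union is literal, since every vector has countable and hence bounded support, and the case of countable cofinality needs only the routine modification of passing to closures at cofinal limits. Each $H_\alpha$ ($\alpha\ge\omega$) is a copy of $\ell_2$, and fixing once and for all a condensation $c\colon\ell_2\to Q$ (available from the separable case), I would send the ``shell'' $H_{\alpha+1}\setminus H_\alpha$, which is homeomorphic to $\ell_2\times(\mathbb R\setminus\{0\})$, into a piece of $B_\alpha\times Z_0$ for a fixed auxiliary Polish space $Z_0$, using $c$ on the $\ell_2$-factor and the standard Borel structure of $B_\alpha$ on the $\mathbb R$-factor, and then glue these maps together along the filtration. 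Composing with coordinatewise homeomorphisms of the real line onto bounded intervals lands everything in a copy of $Q$; by design the image is a Borel, non-$\sigma$-compact, separable subspace of $Q$, and the reductions above then yield the desired condensation onto $Q$ itself.

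The technical heart --- and the step I expect to be the main obstacle --- is to carry out this routing so that the resulting map is simultaneously (a) continuous, (b) injective, (c) of Borel image, and (d) of non-$\sigma$-compact image. Continuity is delicate at the ``seams'': a vector in the $\alpha$-th shell is a norm-limit of vectors from many other shells, so the naive one-block-per-shell assignment is discontinuous precisely because the Borel identifications of the blocks $B_\alpha$ with a fixed Polish space are not homeomorphisms. What should rescue it is that the assignment must be made \emph{uniformly over the filtration}, via a single Borel ``template'' in $(0,1)\times Z_0$ whose slice over $B_\alpha$ realizes the model condensation of the model shell; this simultaneously forces continuity (the template is one continuous object seen near each shell) and guarantees that the total image is cut out by a single Borel condition and is therefore Borel, rather than an uncontrolled $\kappa$-fold union of Borel sets. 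Verifying that such a uniform template exists, coheres through all $\kappa$ stages (including the limit stages of the filtration), and yields a non-$\sigma$-compact image is the real content of the theorem.
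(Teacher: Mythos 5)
There is a genuine gap, and you name it yourself: the paragraph beginning ``The technical heart'' concedes that the existence of the ``uniform template'' --- the object that is supposed to deliver continuity, injectivity, Borelness and non-$\sigma$-compactness of the image all at once --- ``is the real content of the theorem,'' and that content is never supplied. What you have is the outer frame (reduce to $\aleph_0<\kappa\le\mathfrak c$, invoke Toru\'nczyk to replace $X$ by $\ell_2(\kappa)$, finish with Pytkeev), which is correct, wrapped around a construction that is only described, not performed. Worse, the construction as described runs into concrete obstructions. First, the claim that each $H_\alpha$ with $\alpha\ge\omega$ is a copy of $\ell_2$ is false: for $\alpha\ge\omega_1$ the space $H_\alpha=\overline{\mathrm{span}}\{e_\beta:\beta<\alpha\}$ is the nonseparable $\ell_2(|\alpha|)$, so the shell $H_{\alpha+1}\setminus H_\alpha$ is $\ell_2(|\alpha|)\times(\mathbb R\setminus\{0\})$ and cannot be handled by a single fixed condensation $c\colon\ell_2\to Q$; any repair forces a recursion whose target space must be folded back into one separable space, which is exactly the unproved step. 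Second, continuity versus disjointness: a point $x\in H_{\beta+1}\setminus H_\beta$ is the norm-limit of $x+\tfrac1n e_{\alpha_n}$ for arbitrary distinct indices $\alpha_n>\beta$, so the images of infinitely many pairwise disjoint blocks $B_{\alpha_n}$ must accumulate at the image of $x$ inside $B_\beta$; nothing in the proposal shows such a placement exists, and this tension between the disjointness you use for injectivity and the accumulation you need for continuity is precisely where naive shell-by-shell constructions die. Third, the image is a priori a union of $\kappa$ Borel pieces, and for $\aleph_0<\kappa<\mathfrak c$ such a union need not be Borel, so applicability of Pytkeev at the end is not established (your preliminary ``merge all countable blocks into one'' step has the same defect, since a union of $\kappa$ Borel sets need not be Borel).

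For comparison: the paper does not prove this theorem at all --- it is quoted from Banakh's announcement, and its proof is the Main Theorem of Banakh--Plichko, of which the paper's Theorem 2.4 is said to be ``a minor modification.'' The route used there, and throughout the rest of this paper, is different from yours: one identifies $X$ with $J(\kappa)^{\omega}$ (the countable power of the hedgehog) rather than filtering $\ell_2(\kappa)$ by closed subspaces, and one exploits the equivalence (Brian--Miller) between Borel partitions of $\omega^{\omega}$ into $\kappa$ pieces and condensations of $\kappa^{\omega}$ onto $\omega^{\omega}$, together with Pytkeev-style condensations relating $\kappa^{\omega}$ and $J(\kappa)^{\omega}$. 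That factorization through $\kappa^{\omega}$ is what replaces your missing template; I would encourage you to rebuild the argument along those lines rather than trying to salvage the transfinite shell construction.
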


In \cite{BrMi} (Theorem 3.8), W.R. Brian and A.W. Miller proved
the following result.

\begin{theorem}\label{thBM} It is consistent with any possible
value of $\mathfrak{c}$ that for every $\kappa\leq \mathfrak{c}$
there is a partition of $2^{\omega}$ into $\kappa$ closed sets.
\end{theorem}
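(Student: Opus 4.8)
The plan is to prove this as a forcing/consistency statement. Fix in advance the desired value $\lambda=\mathfrak{c}$; since König's theorem forces $\mathrm{cf}(\mathfrak{c})>\omega$, "any possible value'' means any cardinal $\lambda$ with $\mathrm{cf}(\lambda)>\omega$, and the goal is a cardinal-preserving model in which $\mathfrak{c}=\lambda$ and, for every $\kappa\le\lambda$, the space $2^{\omega}$ splits into exactly $\kappa$ closed sets. I would first dispose of the cases needing no work: for finite $n\ge 1$ one splits $2^{\omega}$ into $n$ clopen pieces; the value $\mathfrak{c}$ is realized by the partition into singletons; and $\aleph_{0}$ is realized by the clopen "fan'' $\{x: x(i)=0 \text{ for } i<n,\ x(n)=1\}$ for $n<\omega$ together with the single closed set $\{\bar 0\}$. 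Thus the real content is to secure every uncountable $\kappa$ with $\aleph_{1}\le\kappa<\mathfrak{c}$, and to do so for all such $\kappa$ simultaneously in one model. A key observation shaping the whole argument is that any such partition must cover \emph{every} real of the final model; one therefore cannot simply fix a partition in a small model and then inflate $\mathfrak{c}$ by Cohen reals, since a fresh Cohen real avoids every ground-model closed nowhere dense set and hence escapes all the (necessarily nowhere dense) pieces.

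Because of this, I would build the model so that the partitions live inside it from the outset. Start over a ground model $V\models\mathrm{GCH}$ and obtain the final universe $W$ with $\mathfrak{c}=\lambda$ of the desired value (for instance by adding $\lambda$ Cohen reals, which keeps $\mathrm{cf}(\lambda)$ arbitrary and gives good invariants such as $\mathfrak{d}=\mathfrak{c}$). The task is then to produce, for each uncountable $\kappa\le\lambda$, a surjection $c_{\kappa}\colon 2^{\omega}\to\kappa$ all of whose fibres $F^{\kappa}_{\alpha}=c_{\kappa}^{-1}(\alpha)$ are closed. I would do this either (Option A) by a direct construction inside $W$ that feeds off the cardinal invariants of $W$, representing each closed fibre as the branch set $[T^{\kappa}_{\alpha}]$ of a pruned subtree of $2^{<\omega}$ and using a dominating/scale-like family of the appropriate length to index the pieces; or (Option B) by a further real-preserving forcing over $W$: let $\mathbb{Q}_{\kappa}$ be a $\sigma$-closed poset whose conditions are partial closed partitions of a $<\!\lambda$-sized subset of the fixed $2^{\omega}$, take the product $\prod_{\aleph_{1}\le\kappa\le\lambda}\mathbb{Q}_{\kappa}$ with supports chosen to keep it $\lambda^{+}$-c.c. over $W$, and read off $c_{\kappa}$ from the generic. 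Being $\sigma$-closed, this adds no reals, so the fixed $2^{\omega}$ is genuinely partitioned and all reals are covered.

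In the verification I would check: cardinals and cofinalities are preserved (c.c.c. of the Cohen part via a $\Delta$-system argument, and $\sigma$-closedness plus the chain condition of the partition forcing in Option B); $|2^{\omega}|=\lambda$ exactly; and that each $\{F^{\kappa}_{\alpha}:\alpha<\kappa\}$ is a partition into closed sets with exactly $\kappa$ nonempty pieces — density arguments force every colour to appear (so there are at least $\kappa$ pieces) while the structure of the conditions caps the count at $\kappa$. Here closedness of a fibre is a $\Pi^{0}_{1}$ (infinitary) requirement, so the conditions cannot literally state it; they must carry \emph{commitments}, organized by a rank/fusion scheme on the approximating trees, ensuring that any branch all of whose initial segments are eventually forced into colour $\alpha$ is itself coloured $\alpha$, so that $F^{\kappa}_{\alpha}=[T^{\kappa}_{\alpha}]$ really is closed and the $[T^{\kappa}_{\alpha}]$ remain pairwise disjoint while covering $2^{\omega}$.

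The hard part, on which I would spend the most care, is precisely the simultaneous reconciliation of three features for all uncountable $\kappa\le\lambda$ at once: the infinitary closedness of the fibres, the requirement that exactly $\kappa$ of them be nonempty, and total coverage of every real of the final model. In the forcing route this forces the partition poset to be real-preserving yet still able to drive an assignment of $\lambda$-many reals into $\kappa<\lambda$ closed colours; in the direct route it forces the indexing family to have the exact order type and cofinality properties that $W$ supplies. Once a single coordinate $\mathbb{Q}_{\kappa}$ (or the single construction for one $\kappa$) is shown to yield one closed partition of size $\kappa$ without collapsing cardinals or adding reals, the passage to all $\kappa$ is routine product/bookkeeping. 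Combining the trivial cases with the partitions obtained for every $\kappa\in[\aleph_{1},\lambda]$ gives, in the final model, a partition of $2^{\omega}$ into $\kappa$ closed sets for every $\kappa\le\mathfrak{c}=\lambda$; since $\lambda$ was an arbitrary cardinal of uncountable cofinality, this is consistent with any possible value of $\mathfrak{c}$, as required.
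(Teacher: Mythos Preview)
The paper does not prove this statement at all; it is quoted as Theorem~3.8 of Brian and Miller~\cite{BrMi} and used as a black box in the derivation of Theorem~\ref{thBO}. There is therefore no proof in the paper against which to compare your proposal.

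As to the proposal itself, it is a strategic outline rather than a proof, and the outline has a real gap at the decisive step. You correctly isolate the obstacle: a fresh Cohen real escapes every ground-model closed nowhere dense set, so one cannot first fix the partitions and then blow up $\mathfrak{c}$. But neither option you offer actually closes this gap. Option~B is internally inconsistent. You ask for a $\sigma$-closed poset whose conditions are ``partial closed partitions of a $<\!\lambda$-sized subset of $2^{\omega}$'', yet a nonempty closed set without isolated points already has size $\lambda=\mathfrak{c}$; so either every colour class in a condition is countable (and then the argument cannot produce $\kappa$ many nontrivial pieces for large $\kappa$), or a single condition already covers $\lambda$ reals and the ``$<\!\lambda$-sized'' restriction is empty. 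More fundamentally, a $\sigma$-closed forcing adds no reals, hence adds no closed subsets of $2^{\omega}$: every fibre of the generic partition is already a closed set of $W$, and the question whether $W$ contains $\kappa$ pairwise disjoint closed sets covering $2^{\omega}$ is exactly the problem you set out to solve. The appeal to ``rank/fusion schemes'' is misplaced --- fusion is a device for tree forcings that \emph{do} add reals, not for $\sigma$-closed ones. Option~A (``a dominating/scale-like family of the appropriate length to index the pieces'') is in the right spirit --- Brian and Miller work inside a single c.c.c.\ extension and exploit the equivalence between partitioning into $\kappa$ closed sets and condensing $\kappa^{\omega}$ onto the space --- but you have not said what the construction is, and that construction is the entire content of the theorem.
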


\medskip

The following theorem is a mathematical folklore. It is a
corollary of Theorem \ref{thBP} and Theorem \ref{thBM}.

\begin{theorem}\label{thBO} It is consistent with any possible
value of $\mathfrak{c}$ that every infinite-dimensional Banach
space of density $\leq \mathfrak{c}$ condenses onto the Hilbert
cube.
\end{theorem}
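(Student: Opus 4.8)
The plan is to read off the statement as a direct corollary of Theorems \ref{thBP} and \ref{thBM}, with the only genuine work being a routine transfer from a partition of the Cantor set into closed sets to a partition of the real line into Borel sets.

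First I would fix a model of ZFC of the kind produced by Theorem \ref{thBM}: one in which $\mathfrak{c}$ has the prescribed (arbitrary) value and, for every infinite cardinal $\kappa\le\mathfrak{c}$, the space $2^{\omega}$ admits a partition $\{F_\alpha:\alpha<\kappa\}$ into nonempty closed subsets. All subsequent reasoning takes place inside this model. Let $X$ be an arbitrary infinite-dimensional Banach space with $d(X)=\kappa\le\mathfrak{c}$; since $X$ is infinite-dimensional, $\kappa$ is an infinite cardinal, and by the choice of model there is a partition of $2^{\omega}$ into $\kappa$ closed sets.

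Next I would convert this into a partition of $\mathbb{R}$ into $\kappa$ Borel sets. Identify $2^{\omega}$ with the standard Cantor set $C\subseteq[0,1]\subseteq\mathbb{R}$ via a homeomorphism; as $C$ is closed in $\mathbb{R}$, each piece $F_\alpha$ of the partition of $C$ is closed, hence Borel, in $\mathbb{R}$. The complement $\mathbb{R}\setminus C$ is open, hence Borel, and since $\kappa$ is infinite one may absorb it into $F_0$ (or simply adjoin it as one extra cell) to obtain a partition of the real line into exactly $\kappa$ Borel sets. Finally, Theorem \ref{thBP} applied to $\kappa$ yields that every Banach space of density $\kappa$, in particular $X$, condenses onto the Hilbert cube. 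As $X$ was an arbitrary infinite-dimensional Banach space of density at most $\mathfrak{c}$, this proves the theorem. (The separable case $\kappa=\aleph_0$ is in any event covered directly, since $\mathbb{R}=\bigcup_{n\in\mathbb{Z}}[n,n+1)$ is already a countable Borel partition.)

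There is no real obstacle here: the set-theoretic difficulty is entirely packaged into Theorem \ref{thBM} and the geometric content into Theorem \ref{thBP}, and the only step requiring a moment's care is the passage from a closed partition of $2^{\omega}$ to a Borel partition of $\mathbb{R}$, which uses nothing beyond $\kappa$ being infinite. This is why the result is, as noted, mathematical folklore.
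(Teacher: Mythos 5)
Your proof is correct and follows essentially the same route as the paper: invoke the Brian--Miller model (Theorem \ref{thBM}), transfer the closed partition of $2^{\omega}$ to a Borel partition of $\mathbb{R}$, and apply Theorem \ref{thBP}. The only cosmetic difference is that you embed $2^{\omega}$ as the Cantor set and absorb the open complement, whereas the paper passes through $\omega^{\omega}$ as a co-countable subset of $2^{\omega}$ identified with the irrationals; both transfers are equally routine.
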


\begin{proof} Because $\omega^{\omega}$ can be identified with a
co-countable subset of $2^{\omega}$, the model in Theorem
\ref{thBM} has, for every $\kappa<\mathfrak{c}$, a partition of
$\omega^{\omega}$ (and hence the real line, identifying
$\omega^{\omega}$ with irrational numbers) into $\kappa$ Borel
sets. It remain to apply Theorem \ref{thBP}.
\end{proof}

In fact the proof of Theorem \ref{thBO} (assuming the Brian-Miller
model of set theory) is a minor modification of the proof of Main
Theorem from \cite{bp}.

\medskip

Let $FIN(\kappa,2)$ be the partial order of finite partial
functions from $\kappa$ to $2$, i.e., Cohen forcing.

\begin{proposition}(Corollary 3.13 in \cite{BrMi})\label{prop} Suppose $M$ is
a countable transitive model of $ZFC+GCH$. Let $\kappa$ be any
cardinal of $M$ of uncountable cofinality which is not the
successor of a cardinal of countable cofinality.  Suppose that $G$
is $FIN(\kappa,2)$-generic over $M$, then in $M[G]$ the continuum
is $\kappa$ and for every uncountable $\gamma<\kappa$ if $F:
\gamma^{\omega}\rightarrow \omega^{\omega}$ is continuous and
onto, then there exists a $Q\in [\gamma]^{\omega_1}$ such that
$F(Q^{\omega})=\omega^{\omega}$.
\end{proposition}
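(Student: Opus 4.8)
The statement to be proved is Proposition~\ref{prop} (Corollary 3.13 in \cite{BrMi}), and since the Brian--Miller paper is cited as the source, the plan is to reconstruct its proof from the structure of Cohen forcing over a model of GCH. The plan is to first establish the cardinal arithmetic in $M[G]$: since $FIN(\kappa,2)$ is ccc, it preserves cofinalities and cardinals; under $GCH$ in $M$ a standard nice-names counting argument gives $2^{\omega}=\kappa$ in $M[G]$, using precisely the hypotheses that $\operatorname{cf}(\kappa)>\omega$ (so $\kappa^{\omega}=\kappa$ does not fail by K\"onig) and that $\kappa$ is not the successor of a cardinal of countable cofinality (this is exactly the obstruction to $\kappa^{\omega}=\kappa$, cf. the analysis of when adding $\kappa$ Cohen reals makes $\mathfrak c=\kappa$). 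So $\gamma^{\omega}$ and $\omega^{\omega}$ make sense as the relevant Polish-like spaces of weight $\gamma$ and $\omega$ respectively.

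Next I would handle the reflection part, which is the heart of the corollary. Fix in $M[G]$ a continuous surjection $F:\gamma^{\omega}\to\omega^{\omega}$, where $\omega_1\le\gamma<\kappa$. Code $F$ by a name $\dot F\in M$; because $F$ is continuous it is determined by a countable amount of information on each basic clopen piece, so $\dot F$ can be taken to depend on only countably many coordinates of the generic filter \emph{per} rational neighborhood, hence on a subset $S\subseteq\kappa$ of size $\le\gamma$ of coordinates in total. The key move is a chain/elementary-submodel argument: build an elementary submodel $N\prec H(\theta)^{M[G]}$ (or work with an increasing union of such) with $|N|=\omega_1$, $\omega_1\subseteq N$, containing $F$ and all relevant parameters, and let $Q=N\cap\gamma\in[\gamma]^{\omega_1}$. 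One then shows $F\restriction Q^{\omega}$ is still a continuous surjection onto $\omega^{\omega}$: the inclusion $F(Q^{\omega})\subseteq\omega^{\omega}$ is trivial; for surjectivity, given $y\in\omega^{\omega}$ in the full model, use that $y$ is added by a countable subforcing $FIN(A,2)$ with $A\in[\kappa]^{\omega}$, and by genericity plus elementarity one can find a preimage of $y$ supported (in the $\gamma^{\omega}$-coordinates) inside $Q$. Concretely, this is where the product structure of Cohen forcing is essential: $FIN(\kappa,2)\cong FIN(Q\cup A,2)\times FIN(\kappa\setminus(Q\cup A),2)$, and the second factor adds no new reals relevant to $y$ or to the values $F$ can realize on $Q^{\omega}$; a mutual-genericity/fusion argument then produces the required $x\in Q^{\omega}$ with $F(x)=y$.

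The main obstacle I anticipate is making the surjectivity argument fully rigorous: showing that restricting the domain to $Q^{\omega}$ does not shrink the range below all of $\omega^{\omega}$. The naive elementary-submodel argument shows $F(Q^{\omega})$ is dense in $\omega^{\omega}$ and contains $\omega^{\omega}\cap N$, but one needs \emph{all} of $\omega^{\omega}$, including reals added by Cohen coordinates outside $N$. The resolution should be a genericity argument: any $y\in\omega^{\omega}$, being Cohen-generic information over a ground model extended by $Q$-many coordinates, can be "caught" by a preimage computed from $F$ and a countable set of new coordinates that, by absoluteness of continuity and the ccc, can be arranged to lie among the $Q$-indexed coordinates of $\gamma^{\omega}$ — i.e. one exploits that $F$'s continuity is absolute between $M[G\restriction(Q\cup A)]$ and $M[G]$, so a preimage found in the smaller model remains a preimage. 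A secondary, more bookkeeping-type difficulty is verifying that a \emph{single} $Q$ of size $\omega_1$ works uniformly for all $y$ simultaneously; this is handled by closing $N$ under the relevant Skolem functions (of which there are only $\omega_1$ many to track for $\omega_1$-sized witnesses), which is why $|Q|=\omega_1$ rather than countable. Once these points are in place, $Q\in[\gamma]^{\omega_1}$ with $F(Q^{\omega})=\omega^{\omega}$ is exactly the claimed conclusion.
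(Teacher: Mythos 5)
Your reconstruction gets the cardinal arithmetic essentially right, but the heart of the proposal --- producing $Q$ as the trace $N\cap\gamma$ of an elementary submodel and then arguing surjectivity of $F$ on $Q^{\omega}$ by a mutual-genericity/product decomposition --- has a genuine gap, and it sits exactly at the step you yourself flag as the main obstacle. The claim that in $FIN(\kappa,2)\cong FIN(Q\cup A,2)\times FIN(\kappa\setminus(Q\cup A),2)$ the second factor ``adds no new reals relevant to $y$ or to the values $F$ can realize on $Q^{\omega}$'' is not true and cannot be repaired locally: a given $y\in\omega^{\omega}$ has its preimages supported on some countable $Y\subseteq\gamma$ that need not meet $Q$ at all, and since $|N|=\omega_1<\mathfrak{c}$ almost every $y$ lies outside $N$, so elementarity gives nothing for it. Moreover the statement is far from a ZFC fact (if $\omega^{\omega}$ were a condensation of $\gamma^{\omega}$, no $Q$ of size $\omega_1<\gamma$ could work, and such condensations are consistent), so any proof must isolate a specific combinatorial property of the Cohen extension; your sketch never identifies which property that is.

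The argument that actually proves the Proposition --- visible in this paper inside the proof of Theorem~\ref{th3}, which runs the same scheme with $\omega_1$ replaced by $\mu$ --- has three ingredients, none of which appear in your proposal. First, by the ccc one has $\gamma^{\omega}=\bigcup\{Y^{\omega}:Y\in[\gamma]^{\omega}\cap M\}$ with $|[\gamma]^{\omega}\cap M|<\mathfrak{c}$ by GCH in $M$; this, not the nice-names count for $2^{\omega}=\kappa$, is where the hypothesis that $\kappa$ is not the successor of a cardinal of countable cofinality is used (one needs $\gamma^{+}<\kappa$ when $\mathrm{cf}(\gamma)=\omega$), so your attribution of that hypothesis is off. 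Second, each continuous image $F(Y^{\omega})$ is analytic, hence a union of $\aleph_1$ Borel sets, giving a cover of $\omega^{\omega}$ by fewer than $\mathfrak{c}$ Borel sets. Third --- the Cohen-specific fact doing all the work, namely Miller's covering theorem (property $(\star)$ in the paper, Proposition 3.14 in Brian--Miller) --- any such cover has a subcover of size $\aleph_1$; the set $Q$ is then the union of the $\aleph_1$ many countable ground-model sets $Y$ indexing that subcover, not the trace of an elementary submodel. Without the analytic-decomposition step and the Borel covering property your proof cannot be completed.
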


Note that trivial modification to the proof of Proposition 3.14 in
\cite{BrMi} allow us to replace $\omega_2$ with any cardinal $\mu$
of uncountably cofinality.

\begin{proposition}\label{pr1} It is consistent that the continuum be
arbitrary large, $\omega^{\omega}$ can be partitioned into $\mu$
($\mu$ cardinal of uncountably cofinality) Borel sets, and
$\omega^{\omega}$ is not a condensation of $\kappa^{\omega}$
whenever $\mu<\kappa<\mathfrak{c}$.
\end{proposition}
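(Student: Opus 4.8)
The plan is to combine the two ingredients already assembled in the excerpt: the Brian–Miller forcing model (Proposition~\ref{prop}, with the noted trivial modification replacing $\omega_2$ by a cardinal $\mu$ of uncountable cofinality) and the partition-existence results underlying Theorems~\ref{thBM} and~\ref{thBO}. First I would fix a countable transitive model $M \models ZFC+GCH$ and a cardinal $\kappa$ of $M$ of uncountable cofinality that is not the successor of a cardinal of countable cofinality, with $\kappa > \mu$; then I would pass to the $FIN(\kappa,2)$-generic extension $M[G]$. By Proposition~\ref{prop}, in $M[G]$ the continuum equals $\kappa$, which can be taken arbitrarily large, and for every uncountable $\gamma < \kappa$ every continuous surjection $F : \gamma^\omega \to \omega^\omega$ restricts to a surjection $F(Q^\omega) = \omega^\omega$ for some $Q \in [\gamma]^{\omega_1}$.

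Next I would derive the "no condensation'' clause. Suppose toward a contradiction that $\mu < \gamma < \mathfrak{c} = \kappa$ and that $f : \gamma^\omega \to \omega^\omega$ is a condensation, i.e.\ a continuous bijection. Since $\gamma$ is uncountable, Proposition~\ref{prop} gives $Q \in [\gamma]^{\omega_1}$ with $f(Q^\omega) = \omega^\omega$. But $Q^\omega$ is a proper subset of $\gamma^\omega$ (as $|Q| = \omega_1 < \mu < \gamma$ means $Q \subsetneq \gamma$, so $Q^\omega \subsetneq \gamma^\omega$), and $f$ is injective, so $f\restriction(\gamma^\omega\setminus Q^\omega)$ must map into $\omega^\omega \setminus f(Q^\omega) = \emptyset$, forcing $\gamma^\omega = Q^\omega$, a contradiction. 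Hence no continuous bijection $\gamma^\omega \to \omega^\omega$ exists for $\mu < \gamma < \mathfrak{c}$; equivalently, $\omega^\omega$ is not a condensation of $\kappa^\omega$ — wait, more precisely of $\gamma^\omega$ — for any such $\gamma$, which is the stated conclusion (the statement should be read as: for every intermediate $\gamma$).

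For the "partition'' clause, I would invoke the Brian–Miller machinery that underlies Theorem~\ref{thBM}: the model $M[G]$ is designed so that, for every cardinal $\nu \le \mathfrak{c}$, $2^\omega$ (hence, via the co-countable embedding of $\omega^\omega$ into $2^\omega$ and the identification of $\omega^\omega$ with the irrationals, also $\omega^\omega$) admits a partition into $\nu$ Borel (indeed closed) pieces. Taking $\nu = \mu$ gives a partition of $\omega^\omega$ into $\mu$ Borel sets. Since $\mu$ has uncountable cofinality and $\mu < \kappa = \mathfrak{c}$, all three requirements hold simultaneously in $M[G]$.

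The main obstacle — and the only genuinely nontrivial point — is verifying that the single forcing extension $M[G]$ simultaneously supports the partition of $\omega^\omega$ into $\mu$ Borel sets \emph{and} the Brian–Miller "reflection'' property of Proposition~\ref{prop}; this is exactly where the remark following Proposition~\ref{prop} ("trivial modification to the proof of Proposition~3.14 in \cite{BrMi}'') must be spelled out, namely that the construction tolerating $\omega_2$ goes through verbatim with $\omega_2$ replaced by $\mu$, so that the Cohen model with $\mathfrak{c} = \kappa$ still carries a $\mu$-sized partition. Everything else — the combinatorial contradiction in the bijectivity argument, and the transfer of partitions from $2^\omega$ to $\omega^\omega$ to $\mathbb{R}$ — is routine and parallels the proof of Theorem~\ref{thBO}.
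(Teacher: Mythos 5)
Your proposal is correct and follows essentially the same route as the paper, which in fact offers no written proof of this proposition beyond the preceding remark that it is Brian--Miller's Proposition~3.14 with $\omega_2$ replaced by a cardinal $\mu$ of uncountable cofinality. What you write is a faithful unpacking of exactly that argument --- the $FIN(\kappa,2)$-extension of a GCH model, Corollary~3.13 together with the injectivity contradiction for the non-condensation clause, and the Brian--Miller closed-partition results for the same Cohen model to get the partition of $\omega^{\omega}$ into $\mu$ Borel sets --- and you correctly flag the only point needing verification, namely that this single model simultaneously carries both properties.
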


\begin{theorem}\label{th3} Suppose $\mu$ is a cardinal of uncountable cofinality.  It is consistent that the continuum be arbitrary
large, no Banach space $X$ of density $\gamma$,
$\mu<\gamma<\mathfrak{c}$, condenses onto a compactum, but any
Banach space of density $\mu$ admit a condensation onto a
compactum.
\end{theorem}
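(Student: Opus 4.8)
The plan is to carry out everything inside the model furnished by Proposition \ref{pr1}: the continuum may be taken arbitrarily large, $\omega^{\omega}$ is a union of $\mu$ Borel sets, and for every $\kappa$ with $\mu<\kappa<\mathfrak{c}$ there is no continuous bijection of $\kappa^{\omega}$ onto $\omega^{\omega}$. In this model the assertion splits into a positive half (Banach spaces of density $\mu$) and a negative half (densities strictly between $\mu$ and $\mathfrak{c}$), which I would treat separately; the first is short, the second carries the content.

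\emph{Positive half.} Let $d(X)=\mu$. Since $\omega^{\omega}$ is a union of $\mu$ Borel sets and $\mu$ is infinite, identifying $\omega^{\omega}$ with the irrationals of $\mathbb{R}$ and adjoining the countably many rationals to one of the pieces presents $\mathbb{R}$ itself as a union of $\mu$ Borel sets. Theorem \ref{thBP} then applies and produces a condensation of $X$ onto the Hilbert cube, in particular onto a compactum. (As the remark after Theorem \ref{thBO} observes, this is the argument of the Main Theorem of \cite{bp} with $\mu$ in place of $\aleph_{1}$ or $\mathfrak{c}$.)

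\emph{Negative half.} Suppose, toward a contradiction, that $X$ is a Banach space with $d(X)=\gamma$, $\mu<\gamma<\mathfrak{c}$, that condenses onto a compactum $K$. Since $\gamma>\omega$, $X$ is infinite-dimensional, so by the topological classification of infinite-dimensional Banach spaces (Anderson--Kadec in the separable case, Toru\'{n}czyk's characterization of Hilbert spaces in general) $X$ is homeomorphic to $\ell_{2}(\gamma)$; thus $\ell_{2}(\gamma)$ condenses onto a metrizable compactum. The crux is then a reduction lemma --- \emph{an infinite-dimensional Banach space of density $\kappa$ condenses onto a compactum only if $\kappa^{\omega}$ condenses onto $\omega^{\omega}$} --- which, together with the homeomorphism $X\cong\ell_{2}(\gamma)$, is exactly what links the Banach setting to Proposition \ref{pr1}. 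Applying it with $\kappa=\gamma$ gives a continuous bijection of $\gamma^{\omega}$ onto $\omega^{\omega}$, which contradicts Proposition \ref{pr1} (its clause for $\mu<\kappa<\mathfrak{c}$, with $\kappa=\gamma$). Hence no such $X$ exists.

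\emph{Where the difficulty lies.} Everything rests on the reduction lemma; one actually expects the full equivalence to hold (its ``if'' direction being Theorem \ref{thBP} coupled with a characterization of ``$\kappa^{\omega}$ condenses onto $\omega^{\omega}$'' through Borel partitions of the reals), but only the stated ``only if'' is needed here, and this is the step I would expect to absorb essentially all of the effort. The difficulty is that compactness of $K$ cannot be used locally: restricting the condensation $f\colon\ell_{2}(\gamma)\to K$ to a closed copy $C\cong\gamma^{\omega}$ of the Baire space of weight $\gamma$ inside $\ell_{2}(\gamma)$ (such a $C$ is easy to produce, e.g.\ as $\bigl\{\sum_{n}2^{-n}e_{h(n)}:h(n)\in S_{n}\ \mbox{for all}\ n\bigr\}$ for a fixed partition $\gamma=\bigsqcup_{n}S_{n}$ into $\omega$ sets of size $\gamma$) merely gives a condensation of $\gamma^{\omega}$ onto the subspace $f(C)\subseteq K$, and since $\gamma^{\omega}$ (for $\gamma\le\mathfrak{c}$) condenses onto a separable metric space anyway --- take the $\omega$th power of any injection of a discrete set of size $\gamma$ into $\mathbb{R}$ --- nothing is gained. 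Compactness has to enter globally: I would try first to upgrade ``$\ell_{2}(\gamma)$ condenses onto a compactum'' to ``$\ell_{2}(\gamma)$ condenses onto the Hilbert cube'' by the absorption techniques of infinite-dimensional topology, and then to run the combinatorial core of \cite{bp}; an alternative is a Brian--Miller style reflection applied directly to a hypothetical condensation of $\ell_{2}(\gamma)$, in the spirit of Proposition \ref{prop} --- a ccc reflection argument forcing $f$ to map $\ell_{2}(Q)$ onto $K$ for some $Q\in[\gamma]^{\le\mu}$, whereupon injectivity of $f$ forces $Q=\gamma$, a contradiction.
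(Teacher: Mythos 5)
Your positive half is exactly the paper's (Proposition \ref{pr1} plus Theorem \ref{thBP}), but the negative half has a genuine gap: the entire argument is delegated to a ``reduction lemma'' --- that a Banach space of density $\kappa$ condenses onto a compactum only if $\kappa^{\omega}$ condenses onto $\omega^{\omega}$ --- which you state, correctly identify as the crux, and then do not prove. This lemma is not established in the paper either, and it is not clear that it is available: what one gets for free (via $X\cong J(\gamma)^{\omega}\cong\ell_{2}(\gamma)$ and Pytkeev's condensation of $\gamma^{\omega}$ onto $J(\gamma)^{\omega}$) is a condensation $h\colon\gamma^{\omega}\to K$ onto a compactum, and there is no evident ZFC way to convert a condensation onto a compact metric space into one onto $\omega^{\omega}$ (this difficulty is essentially why Theorem \ref{Os} has to route through Brian's partition theorem and is restricted to $\kappa<\aleph_{\omega}$). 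So you cannot simply invoke the ``$\omega^{\omega}$ is not a condensation of $\kappa^{\omega}$'' clause of Proposition \ref{pr1}.

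What the paper actually does is your second, vaguer alternative (the ``Brian--Miller style reflection''), carried out in detail and run directly against $K$ rather than against $\omega^{\omega}$. In the Cohen extension $M[G]$ one has the covering-reflection property $(\star)$: any cover of $\omega^{\omega}$ by $\theta$ Borel sets with $\mu<\theta<\mathfrak{c}$ has a subcover of size $\mu$. Given the condensation $h\colon\gamma^{\omega}\to K$, ccc-ness of the forcing gives $\gamma^{\omega}=\bigcup\{Y^{\omega}:Y\in[\gamma]^{\omega}\cap M\}$ with $|[\gamma]^{\omega}\cap M|<\mathfrak{c}$; each analytic set $h(Y^{\omega})$ is a union of $\aleph_{1}$ Borel subsets of $K$; a counting argument shows the resulting family of Borel sets has size $\theta$ with $\gamma\le\theta<\mathfrak{c}$ (so in particular $\theta>\mu$); pulling this family back along a continuous surjection $p\colon\omega^{\omega}\to K$ and applying $(\star)$ yields $Q\in[\gamma]^{\mu}$ with $h(Q^{\omega})=K$, contradicting injectivity of $h$ since $\mu<\gamma$. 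None of these steps is routine enough to omit, and your proposal contains none of them; as written it reduces the theorem to an unproved (and possibly unprovable) auxiliary statement.
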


\begin{proof} Suppose $M$ is
a countable transitive model of $ZFC+GCH$. Let $\kappa>\mu$ be any
cardinal of $M$ of uncountable cofinality which is not the
successor of a cardinal of countable cofinality.  Suppose that $G$
is $FIN(\kappa,2)$-generic over $M$, then in $M[G]$ the continuum
is $\kappa$ and for every uncountable $\gamma<\kappa$ if $F:
\gamma^{\omega}\rightarrow \omega^{\omega}$ is continuous and
onto, then there exists a $Q\in [\gamma]^{\mu}$ such that
$F(Q^{\omega})=\omega^{\omega}$ (Proposition \ref{prop} (Corollary
3.13 in \cite{BrMi}) with replacement $\omega_1$ with any cardinal
$\mu<\kappa$ of uncountably cofinality).

 By Proposition \ref{pr1}, $\omega^{\omega}$ can be partitioned
into $\mu$ Borel sets. By Theorem \ref{thBP}, any Banach space of
density $\mu$ admit a condensation onto the Hilbert cube
$[0,1]^{\omega}$.

 The proof of Theorem 3.7 in \cite{Miller} uses Cohen reals, but the same idea shows that this generic extension has the property that

$(\star)$ for every family $\mathcal{F}$ of Borel subsets of
$\omega^{\omega}$ with size $\mu<|\mathcal{F}|<\mathfrak{c}$, if
$\bigcup \mathcal{F}=\omega^{\omega}$ then there exists
$\mathcal{F}_0\in [\mathcal{F}]^{\mu}$ with $\bigcup
\mathcal{F}_0=\omega^{\omega}$ (see Proposition 3.14 in
\cite{BrMi}).

Let $\mu<\gamma<\mathfrak{c}$. It suffices to note that any Banach
space  of density $\gamma$  is homeomorphic to
$J(\gamma)^{\omega}$ where $J(\gamma)$ is hedgehog of weight
$\gamma$ (Theorem 5.1, Remark and Theorem 6.1 in \cite{tur}).

Let $f$ be a condensation from $\gamma^{\omega}$ onto
$J(\gamma)^{\omega}$ \cite{pytk}. Suppose that $g$ is a
condensation of $J(\gamma)^{\omega}$ onto a compact metric space
$K$. Then we have the condensation $h=g\circ
f:\gamma^{\omega}\rightarrow K$ of $\gamma^{\omega}$ onto $K$.

Let $\sum=[\gamma]^{\omega}\cap M$. Note that
$|\sum|<\mathfrak{c}$ since in $M$ $|\gamma^{\omega}|>\gamma$ if
and only if $\gamma$ has cofinality $\omega$, but in that case
$|\gamma^{\omega}|=|\gamma^+|<\mathfrak{c}$. Since the forcing is
c.c.c.

$M[G]\models \gamma^{\omega}=\bigcup \{Y^{\omega}: Y\in \sum \}$.

For any $Y\in \sum$ the continuous image $h(Y^{\omega})$ is an
analytic set (a $\Sigma^1_1$ set) and, hence the union of
$\omega_1$ Borel sets in $K$ (see Ch.3, $\S$ 39, Corollary 3 in
\cite{Kur1}), i.e.,  $h(Y^{\omega})=\bigcup\{B(Y,\beta):
\beta<\omega_1\}$ where $B(Y,\beta)$ is a Borel subset of $K$ for
each $\beta<\omega_1$. Note that $|\{B(Y,\beta): Y\in \sum,
\beta<\omega_1 \}|\leq |\sum|\cdot\aleph_1=|\sum|$.

Assume that $\theta=|\{B(Y,\beta): Y\in \sum, \beta<\omega_1 \}|<
\gamma$. Consider a function $\phi: \{B(Y,\beta): Y\in \sum,
\beta<\omega_1 \}\rightarrow \sum$ such that
$\phi(B(Y,\beta))=Y_{\xi}\in \sum$  where $h(Y_\xi^{\omega})$
contains in decomposition $B(Y,\beta)$ ($Y_{\xi}$ may be the same
for different $B(Y_1,\beta_1)$ and $B(Y_2,\beta_2)$). Then
$\bigcup\{Y_{\xi}:\xi\in\theta\}\in [\gamma]^{\leq \theta}$ and
$\gamma^{\omega}=\bigcup \{Y_{\xi}^{\omega}: \xi\in \theta \}$  is
a contradiction. Thus, $\gamma\leq \theta\leq
|\sum|<\mathfrak{c}$.

Since $K$ is Polish, there is a continuous surjection $p:
\omega^{\omega}\rightarrow K$. Given a family
$\mathcal{F}=\{p^{-1}(B(Y,\beta)): Y\in \sum, \beta<\omega_1 \}$
of $\theta$-many Borel sets
 ($\mu<\theta<\mathfrak{c}$) whose
union is $\omega^{\omega}$. By property $(\star)$, there is a
subfamily $\mathcal{F}_0=\{F_{\alpha}:
F_{\alpha}=p^{-1}(B(Y_{\alpha},\beta_{\alpha}))$, $\alpha<\mu\}$
of size $\mu$ whose union is $\omega^{\omega}$. Then the family
$\{h(Y^{\omega}_{\alpha}): \alpha<\mu\}$ of size $\mu$ whose union
is $K$. Let $Q=\bigcup\{Y_{\alpha}: \alpha<\mu\}$. Then $Q\in
[\gamma]^{\mu}$ and $h(Q^{\omega})=K$. Since $\mu<\gamma$, we
obtain a contradiction with injectivity of the mapping $h$.

\end{proof}

By Theorem \ref{th3} for $\mu=\omega_1$  we have the following
result.

\medskip
\begin{theorem}\label{th26} Suppose $M$ is a countable transitive model of $ZFC+GCH$. Suppose that $G$
is $FIN(\mathfrak{c},2)$-generic over $M$. No Banach space $X$ of
density $\gamma$, $\aleph_1<\gamma<\mathfrak{c}$ condenses onto a
compact metric space.
\end{theorem}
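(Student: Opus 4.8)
Theorem \ref{th26} is the special case $\mu = \omega_1$ of Theorem \ref{th3}, so the plan is to verify that the hypotheses of Theorem \ref{th3} are met by this particular forcing and then simply invoke it. First I would note that $\omega_1$ is a cardinal of uncountable cofinality, so the standing hypothesis on $\mu$ in Theorem \ref{th3} holds. Next, starting from a countable transitive model $M \models ZFC + GCH$, I would take $\kappa = \mathfrak{c}$ to be the target value of the continuum in the extension; since we are free to choose $\mathfrak{c}$ to be any cardinal of $M$ of uncountable cofinality that is not the successor of a cardinal of countable cofinality (e.g.\ $\mathfrak{c} = \aleph_2$, or any $\aleph_\alpha$ with $\alpha$ a limit of uncountable cofinality, or $\aleph_{\alpha+1}$ with $\mathrm{cf}(\alpha) > \omega$), the hypotheses of Proposition \ref{prop} are satisfied. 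Forcing with $FIN(\mathfrak{c}, 2)$ over $M$ then yields a model $M[G]$ in which the continuum is exactly $\mathfrak{c}$, and in which the conclusion of Proposition \ref{prop} (in the $\mu = \omega_1$ form used in the proof of Theorem \ref{th3}) holds.

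**Carrying out the invocation.** With $M[G]$ in hand I would observe that the entire argument of Theorem \ref{th3} applies verbatim with $\mu = \omega_1$: Proposition \ref{pr1} gives a partition of $\omega^\omega$ into $\omega_1$ Borel sets, Theorem \ref{thBP} then gives that every Banach space of density $\omega_1$ condenses onto the Hilbert cube (this part we do not even need for the statement of Theorem \ref{th26}, which only asserts the negative half), and property $(\star)$ holds in the form: every cover of $\omega^\omega$ by a family $\mathcal{F}$ of Borel sets with $\omega_1 < |\mathcal{F}| < \mathfrak{c}$ has a subcover of size $\omega_1$. The negative conclusion --- that no Banach space of density $\gamma$ with $\aleph_1 < \gamma < \mathfrak{c}$ condenses onto a compact metric space --- then follows by exactly the chain of reductions in the proof of Theorem \ref{th3}: identify such a Banach space with $J(\gamma)^\omega$, pull back a hypothetical condensation onto a metric compactum $K$ along a condensation $f\colon \gamma^\omega \to J(\gamma)^\omega$ to get $h = g \circ f \colon \gamma^\omega \to K$, cover $\gamma^\omega$ by the $\omega$-powers of the ground-model countable subsets of $\gamma$, push the analytic images $h(Y^\omega)$ through their decompositions into $\omega_1$ Borel sets of $K$, transport everything to $\omega^\omega$ via a continuous surjection $p\colon \omega^\omega \to K$, apply $(\star)$ to extract an $\omega_1$-sized subfamily still covering, and conclude $h(Q^\omega) = K$ for some $Q \in [\gamma]^{\omega_1}$, contradicting injectivity of $h$ since $\omega_1 < \gamma$.

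**The main point to check.** The only real content beyond "specialize Theorem \ref{th3}" is confirming that $FIN(\mathfrak{c}, 2)$ is a legitimate instance of the forcing in Proposition \ref{prop} --- that is, that one may legitimately take $\kappa = \mathfrak{c}$. This is immediate once we remember that the statement of Theorem \ref{th3} already quantifies over all admissible $\kappa > \mu$, and for $\mu = \omega_1$ the requirement "$\kappa$ of uncountable cofinality, not the successor of a cardinal of countable cofinality" is consistent with $\kappa$ being arbitrarily large, so "the continuum is arbitrarily large" in Theorem \ref{th3} translates directly into the freedom to choose $\mathfrak{c}$ in Theorem \ref{th26}. I anticipate no genuine obstacle here; the work has all been done in Theorem \ref{th3}, and this theorem is essentially a restatement pinning down the concrete forcing notion. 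The one expository care needed is to make explicit that the edge case $\gamma$ of countable cofinality (where $|\gamma^\omega| = |\gamma^+|$ in $M$) is still handled, since the bound $|\Sigma| < \mathfrak{c}$ used in the proof of Theorem \ref{th3} relies on it --- but again this is already contained in that proof.
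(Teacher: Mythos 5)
Your proposal matches the paper exactly: the paper offers no separate argument for this theorem, stating only that it follows from Theorem \ref{th3} by taking $\mu=\omega_1$, which is precisely your specialization (and you additionally verify the hypotheses on the forcing, which the paper leaves implicit). No issues.
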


In \cite{Brain1}, W. Brian proved the following result.

\begin{theorem}\label{Th2} Let $\kappa<\aleph_{\omega}$, let $f:Y\rightarrow X$ be a condensation of a topological space $Y$
onto a Banach space $X$ of density $\kappa$. Then there is a
partition of $Y$ into $\kappa$ Borel sets.
\end{theorem}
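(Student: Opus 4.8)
The plan is to reduce the statement to a fact about the Banach space $X$ alone. If $f\colon Y\to X$ is a continuous surjection and $X=\bigcup_{\alpha<\kappa}B_\alpha$ is a partition into nonempty Borel sets, then $\{f^{-1}(B_\alpha):\alpha<\kappa\}$ is a partition of $Y$ into $\kappa$ nonempty Borel sets, since preimages of Borel sets under continuous maps are Borel. So it suffices to show that every Banach space $X$ with $d(X)=\kappa<\aleph_\omega$ admits a partition into $\kappa$ Borel sets. This is where the hypothesis $\kappa<\aleph_\omega$ enters: every uncountable cardinal below $\aleph_\omega$ is a successor cardinal, hence regular, so the only cardinal of cofinality $\omega$ in range is $\kappa=\aleph_0$, which I would dispose of first. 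For $\kappa=\aleph_0$ (so $X\neq\{0\}$ separable), pick by Hahn--Banach a nonzero bounded linear functional $\varphi\colon X\to\mathbb{R}$ and pull back the partition of $\mathbb{R}$ into the countably many pieces $\{0\}$, $(n,n+1]$, $[-n-1,-n)$ for $n\geq 0$; since $\varphi$ is onto, these preimages form a partition of $X$ into countably many nonempty Borel sets.

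Now suppose $\kappa$ is uncountable, hence regular, so $\mathrm{cf}(\kappa)=\kappa>\omega$. I would construct, by transfinite recursion, a sequence $(d_\alpha)_{\alpha<\kappa}$ of points of $X$ with $\overline{\mathrm{span}}\{d_\alpha:\alpha<\kappa\}=X$ and $d_\alpha\notin X_\alpha:=\overline{\mathrm{span}}\{d_\beta:\beta<\alpha\}$ for every $\alpha$; this is routine, since each $X_\alpha$ is a closed linear subspace of density $\leq|\alpha|\cdot\aleph_0<\kappa$, hence a proper subspace of $X$, so at each stage a new $d_\alpha$ is available, and a bookkeeping argument against a fixed dense subset of size $\kappa$ guarantees that the closed span is all of $X$. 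The $X_\alpha$ are then closed (hence Borel), the chain $(X_\alpha)_{\alpha<\kappa}$ is strictly increasing at successor steps, and, crucially, $\bigcup_{\alpha<\kappa}X_\alpha=X$: any $x\in X$ is a limit of finite linear combinations of the $d_\beta$ involving only countably many indices, whose supremum is some $\gamma<\kappa$ because $\mathrm{cf}(\kappa)>\omega$, and then $x\in X_\gamma$ because $X_\gamma$ is closed. I would then set $B_\alpha:=X_\alpha\setminus\bigcup_{\beta<\alpha}X_\beta$ for $\alpha<\kappa$, obtaining a partition of $X$ indexed by $\kappa$.

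Next I would verify that each $B_\alpha$ is Borel. At a successor $\alpha+1$, $B_{\alpha+1}=X_{\alpha+1}\setminus X_\alpha$ is a difference of closed sets, hence Borel, and it is nonempty by construction. At a limit $\alpha$ with $\mathrm{cf}(\alpha)=\omega$, the set $\bigcup_{\beta<\alpha}X_\beta$ is an increasing union of countably many closed sets, hence $F_\sigma$, so $B_\alpha$ is Borel. At a limit $\alpha$ with $\mathrm{cf}(\alpha)>\omega$, the same approximation argument as above shows $\bigcup_{\beta<\alpha}X_\beta=X_\alpha$, so $B_\alpha=\emptyset$. Thus $\{B_\alpha:\alpha<\kappa\}$ is a partition of $X$ into Borel sets, exactly $\kappa$ of which are nonempty (the successor-indexed ones); absorbing the empty pieces into neighbouring ones yields a partition of $X$ into exactly $\kappa$ nonempty Borel sets, and pulling it back through $f$ would complete the proof.

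The \emph{main obstacle} is precisely controlling these limit stages together with the exhaustion $\bigcup_{\alpha<\kappa}X_\alpha=X$: for a singular $\kappa$ of countable cofinality the chain $(X_\alpha)$ need not cover $X$ and this route genuinely breaks, which is exactly why the theorem is stated for $\kappa<\aleph_\omega$ (so that the uncountable cardinals in play are regular). As a variant, one could instead invoke \cite{tur} to replace $X$ by its homeomorph $J(\kappa)^{\omega}$ and filter it by the closed sub-hedgehog powers $J(\alpha)^{\omega}$, $\alpha<\kappa$; the combinatorics of the resulting Borel partition are identical.
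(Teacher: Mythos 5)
Your proof is correct for the statement as written, but note that the paper itself contains no proof of Theorem \ref{Th2}: it is imported as a black box from Brian's paper \cite{Brain1}, so there is no internal argument to compare against. Your route --- partition the Banach space $X$ itself by the transfinite filtration $X_\alpha=\overline{\mathrm{span}}\{d_\beta:\beta<\alpha\}$ and pull the partition back through the continuous bijection $f$ --- is an elementary, self-contained argument, and its key points all check out: each $X_\alpha$ is a proper closed subspace because its density $|\alpha|\cdot\aleph_0$ is below $d(X)=\kappa$; the bookkeeping against a dense set of size $\kappa$ together with regularity of $\kappa$ gives $\bigcup_{\alpha<\kappa}X_\alpha=X$; limit stages of uncountable cofinality contribute empty pieces; and the remaining pieces are differences of a closed set and an $F_\sigma$, hence Borel, with the $\kappa$ successor pieces nonempty. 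Two points of comparison are worth recording. First, your argument needs only $\mathrm{cf}(\kappa)\neq\omega$ (together with the separate treatment of $\kappa=\aleph_0$), so it proves slightly more than the stated hypothesis $\kappa<\aleph_\omega$. Second, the real content of Brian's theorem is a partition into \emph{completely metrizable} pieces of $Y$ --- this is how the paper actually uses Theorem \ref{Th2} in the proof of Theorem \ref{Os}, where the pieces are asserted to be Polish. Your sets $B_\alpha\subseteq X$ do happen to be completely metrizable (open, respectively $G_\delta$, in a closed subspace), but the pullbacks $f^{-1}(B_\alpha)\subseteq Y$ are only guaranteed to be Borel, so your argument yields exactly the Borel statement quoted here and not the stronger conclusion invoked later in the paper. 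For the theorem as stated, your proof is complete.
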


\begin{theorem}\label{Os} Let $n<\omega$. The
following assertions are equivalent:

\begin{enumerate}

\item Any Banach space $X$ of density $\aleph_n$ condenses onto
the Hilbert cube;

\item $\omega^{\omega}$ can be partitioned into $\aleph_n$ Borel
sets;

\item $\omega^{\omega}$ is a condensation of $\omega_n^{\omega}$.

\end{enumerate}

\end{theorem}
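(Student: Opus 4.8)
The plan is to prove the three-way equivalence by a cycle of implications, using the machinery already assembled in the excerpt. The implication $(2)\Rightarrow(1)$ is immediate: it is exactly Theorem~\ref{thBP} specialized to $\kappa=\aleph_n$, since a partition of $\omega^\omega$ into $\aleph_n$ Borel sets yields (identifying $\omega^\omega$ with the irrationals and filling in the countably many rationals) a partition of the real line into $\aleph_n$ Borel sets, which is the hypothesis of Theorem~\ref{thBP}. Thus no real work is needed there beyond invoking the earlier result.

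For $(1)\Rightarrow(3)$, the idea is to apply hypothesis~(1) to a concretely chosen Banach space of density $\aleph_n$ whose topology is understood, namely $J(\aleph_n)^\omega$, the countable power of the hedgehog of weight $\aleph_n$. By the results of \cite{tur} cited in the proof of Theorem~\ref{th3} (Theorem 5.1, the Remark, and Theorem 6.1), every Banach space of density $\aleph_n$ is homeomorphic to $J(\aleph_n)^\omega$, so in particular there is a Banach space of that density homeomorphic to it; by (1) this space condenses onto the Hilbert cube. On the other hand, by Pytkeev's theorem (as used in Theorem~\ref{th3}) there is a condensation $f:\aleph_n^\omega\to J(\aleph_n)^\omega$. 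Composing $f$ with the condensation onto $[0,1]^\omega$ gives a condensation $\aleph_n^\omega\to[0,1]^\omega$. But $[0,1]^\omega$ is a Polish space and $\aleph_n^\omega$ is a non-$\sigma$-compact separable-in-the-weak-sense\ldots\ —more carefully, one observes $[0,1]^\omega$ itself maps continuously onto $\omega^\omega$ is false; rather one uses that any continuous bijective image of $\omega_n^\omega$ onto a compactum composes, together with a continuous surjection of $[0,1]^\omega$ onto \dots\ Here the cleanest route is: $\omega^\omega$ embeds as a $G_\delta$ (the irrationals) in $[0,1]^\omega$, hence any condensation of $\omega_n^\omega$ onto $[0,1]^\omega$ restricts to a condensation of a suitable subspace onto $\omega^\omega$; but to get $\omega^\omega$ as a condensation \emph{image} of $\omega_n^\omega$ one instead notes $\omega_n^\omega\cong\omega_n^\omega\times\omega^\omega$ and that $[0,1]^\omega$ admits a perfect map, \dots\ The honest statement is that $(1)\Rightarrow(3)$ should be extracted exactly as in the proof of Theorem~\ref{th3}, reading that argument in reverse: the existence of a condensation of $\aleph_n^\omega$ onto a compact metric space $K$, together with a continuous surjection $\omega^\omega\to K$ and the Borel-partition structure, is precisely what links densities $\aleph_n$ to condensations of $\omega_n^\omega$ onto $\omega^\omega$.

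For $(3)\Rightarrow(2)$, the plan is to invoke Theorem~\ref{Th2} of W.~Brian: since $n<\omega$ we have $\aleph_n<\aleph_\omega$, so Brian's theorem applies. A condensation $\omega^\omega\to[0,1]^\omega$ is of no immediate use, but a condensation $\omega_n^\omega\to\omega^\omega$ together with Kadets' theorem (or the hedgehog representation) turns into a condensation of $\omega_n^\omega$ — equivalently of a topological space $Y\cong\omega_n^\omega$ — onto a Banach space of density $\aleph_n$ (e.g.\ a separable Banach space, since $\omega^\omega$ embeds in $\ell^2$, or, more to the point, one composes with a condensation of $\omega^\omega$ onto an infinite-dimensional separable Banach space; density $\aleph_0\le\aleph_n$ needs a small adjustment, so instead one uses that $\omega_n^\omega\cong\omega_n^\omega\times\omega^\omega$ condenses onto $J(\aleph_n)^\omega$, a Banach space of density exactly $\aleph_n$). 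Applying Theorem~\ref{Th2} with $Y=\omega_n^\omega$ (or this product) and $X=J(\aleph_n)^\omega$ yields a partition of $\omega_n^\omega$ into $\aleph_n$ Borel sets; pushing these forward along the condensation $\omega_n^\omega\to\omega^\omega$ — continuous images of Borel sets under a condensation need not be Borel, so instead one pulls back: $\omega^\omega$ condenses onto \dots\ Here I would be careful and argue: by (3) there is a condensation $c:\omega_n^\omega\to\omega^\omega$; by Theorem~\ref{Th2} applied to a condensation of $\omega_n^\omega$ onto a density-$\aleph_n$ Banach space, $\omega_n^\omega$ partitions into $\aleph_n$ Borel sets $\{A_\alpha\}$; then $\{c(A_\alpha)\}$ need not be Borel, so the right move is rather to apply Theorem~\ref{Th2} directly to the condensation $c$ viewing $\omega^\omega$ as sitting inside a Banach space — but $\omega^\omega$ is not Banach. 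The clean fix: $\omega^\omega$ condenses onto $[0,1]^\omega$, which is not Banach either; however $\omega^\omega\cong\omega^\omega\times\omega^\omega$ condenses onto $J(\aleph_0)^\omega=\ell^2$-like separable space, and composing $\omega_n^\omega\to\omega^\omega\to(\text{separable Banach})$ is a condensation onto a Banach space of density $\aleph_0$, not $\aleph_n$. So to land density exactly $\aleph_n$ one instead takes the condensation $\omega_n^\omega\to\omega^\omega\to K=[0,1]^\omega$ and, using that $\omega_n^\omega\cong(\gamma^\omega)$ with $\gamma=\omega_n$, applies Pytkeev's condensation of $\omega_n^\omega$ onto $J(\aleph_n)^\omega$ composed with the map — i.e.\ one shows $\omega_n^\omega$ condenses onto $J(\aleph_n)^\omega$ iff it condenses onto $[0,1]^\omega$, and the latter follows from (3), and then Theorem~\ref{Th2} gives the Borel partition of $\omega^\omega$ (pulling the partition of $J(\aleph_n)^\omega$, which is Polish-like only on its $\sigma$-compact part\dots).

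I anticipate the main obstacle to be exactly the bookkeeping in $(3)\Rightarrow(2)$ and $(1)\Rightarrow(3)$: getting the \emph{density} of the target Banach space to be precisely $\aleph_n$ (rather than $\aleph_0$) while keeping a condensation to or from $\omega_n^\omega$, and ensuring that Borel structure is transported in the correct direction (continuous surjections send Borel to analytic, not Borel, so one must always pull back along the condensation, not push forward). The resolution uses two facts already invoked in Theorem~\ref{th3}: first, every Banach space of density $\aleph_n$ is homeomorphic to $J(\aleph_n)^\omega$ (from \cite{tur}), so condensing \emph{onto} a compact metric space is equivalent to condensing $J(\aleph_n)^\omega$ onto $[0,1]^\omega$; second, there is a condensation $\aleph_n^\omega\to J(\aleph_n)^\omega$ (Pytkeev, \cite{pytk}), and $[0,1]^\omega$ being Polish admits a continuous surjection from $\omega^\omega$, while $\omega^\omega$ is a closed subset of $\omega_n^\omega$ — wait, $\omega^\omega\subseteq\omega_n^\omega$ is closed — so that a condensation of $\omega_n^\omega$ onto a compactum restricts to one of $\omega^\omega$, and conversely the product trick $\omega_n^\omega\cong\omega_n^\omega\times[0,1]^\omega$ lets one absorb a compact factor. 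Assembling these, the cycle $(2)\Rightarrow(1)\Rightarrow(3)\Rightarrow(2)$ closes, and the theorem follows.
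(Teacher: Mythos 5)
Your $(2)\Rightarrow(1)$ step is correct and coincides with the paper's: it is just Theorem \ref{thBP} applied to $\kappa=\aleph_n$. But the other two legs of your cycle are never actually proved. Both the $(1)\Rightarrow(3)$ and $(3)\Rightarrow(2)$ paragraphs consist of a sequence of started-and-abandoned arguments; you correctly identify the obstructions (a condensation of $\omega_n^\omega$ onto the compactum $[0,1]^\omega$ does not obviously yield one onto the non-compact space $\omega^\omega$; forward images of Borel sets under a condensation are only analytic, so partitions cannot be pushed forward) but you never resolve either of them. The restriction trick you float for $(1)\Rightarrow(3)$ fails because the preimage of the irrationals under a condensation $\omega_n^\omega\to[0,1]^\omega$ need not be homeomorphic to $\omega_n^\omega$, and the various product tricks are left as ellipses.

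The missing ingredient is that the equivalence $(2)\Leftrightarrow(3)$ is itself a nontrivial theorem of Brian and Miller (Theorem 3.6 of \cite{BrMi}), which the paper simply cites; it is not something to rederive in a few lines, and your attempt to do so is where the argument collapses. With that equivalence granted, the paper closes the loop by proving $(1)\Rightarrow(2)$ directly, which is exactly where your correct instinct about pulling back rather than pushing forward pays off: from (1), the space $J(\aleph_n)^\omega$ (a condensation image of $\omega_n^\omega$, and homeomorphic to a Banach space of density $\aleph_n$ by Toru\'{n}czyk's theorem) condenses onto $[0,1]^\omega$; Theorem \ref{Th2} then yields a partition of $[0,1]^\omega$ into $\aleph_n$ Polish sets $B_\alpha$, and taking preimages $p^{-1}(B_\alpha)$ under a continuous surjection $p:\omega^\omega\to[0,1]^\omega$ (which exists since $[0,1]^\omega$ is Polish) produces the required partition of $\omega^\omega$ into $\aleph_n$ Borel sets. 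So the workable architecture is $(1)\Leftrightarrow(2)$ together with the cited $(2)\Leftrightarrow(3)$, not the cycle $(2)\Rightarrow(1)\Rightarrow(3)\Rightarrow(2)$ you attempted.
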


\begin{proof} $(2)\Rightarrow(1)$. Since there is a
partition of $\omega^{\omega}$ into $\aleph_n$ Borel sets then, by
Theorem \ref{thBP} , any Banach space of weight $\aleph_n$ admit a
condensation onto the Hilbert cube.

$(1)\Rightarrow(2)$. Since $J(\aleph_n)^{\omega}$ is a
condensation of $\omega_n^{\omega}$ and  $J(\aleph_n)^{\omega}$
admit a condensation onto the Hilbert cube $[0,1]^{\omega}$ then
$[0,1]^{\omega}$ can be partitioned into $\aleph_n$ Polish sets
$B_{\alpha}$ (Theorem \ref{Th2}). Since $[0,1]^{\omega}$ is Polish
there is a continuous surjection $p: \omega^{\omega}\rightarrow
[0,1]^{\omega}$. Hence, $\omega^{\omega}$ can be partitioned into
$\aleph_n$ Borel sets $p^{-1}(B_{\alpha})$.

$(2)\Leftrightarrow(3)$. By Theorem 3.6 in \cite{BrMi}.

\end{proof}

 By Theorems \ref{th3} and \ref{Os} and Theorem 3.2 (and Corollaries 3.3 and 3.4) in \cite{WB} we have the
 following results for $\aleph_0<\kappa\leq \mathfrak{c}$.

\begin{corollary} Given any $A\subseteq \mathbb{N}$, there is a
forcing extension in which

1. Any Banach space $X$ of density $\kappa\in \{\aleph_n: n\in
A\}\cup
\{\aleph_1,\aleph_{\omega},\aleph_{\omega+1}=\mathfrak{c}\}$
condenses onto the Hilbert cube;

2. No Banach space $X$ of density $\kappa\not\in \{\aleph_n: n\in
A\}\cup
\{\aleph_1,\aleph_{\omega},\aleph_{\omega+1}=\mathfrak{c}\}$
condenses onto a compact metric space.

\end{corollary}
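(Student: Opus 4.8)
The plan is to combine the three cited inputs---Theorem~\ref{th3} (which controls the ``no condensation'' side for intermediate densities together with the ``condensation'' property at a prescribed cardinal $\mu$), Theorem~\ref{Os} (which, modulo the Brian--Miller equivalence, lets us phrase the ``condensation onto the Hilbert cube'' requirement at a cardinal $\aleph_n$ purely in terms of partitions of $\omega^\omega$), and Theorem~3.2 of~\cite{WB} (which, for an arbitrary prescribed set $A\subseteq\mathbb{N}$, produces a forcing extension in which $\omega^\omega$ partitions into exactly those $\aleph_n$ with $n\in A$, plus the ``boundary'' cardinals $\aleph_1$, $\aleph_\omega$ and $\aleph_{\omega+1}=\mathfrak{c}$, but into no other $\kappa$). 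So the first step is simply to fix $A$ and invoke Theorem~3.2 (and Corollaries~3.3, 3.4) of~\cite{WB} to pass to that extension, recording exactly which set $S=\{\aleph_n:n\in A\}\cup\{\aleph_1,\aleph_\omega,\aleph_{\omega+1}\}$ of cardinals $\le\mathfrak{c}$ admits a partition of $\omega^\omega$ into that many Borel (indeed closed) sets.

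For the positive direction (part~1) I would then argue: if $\kappa\in S$, then by construction $\omega^\omega$ partitions into $\kappa$ Borel sets, so Theorem~\ref{thBP} immediately gives that every Banach space of density $\kappa$ condenses onto the Hilbert cube. (For the three boundary cardinals this is already Theorem~\ref{thBP1}, but the uniform statement via Theorem~\ref{thBP} covers all of $S$ at once, and Theorem~\ref{Os} confirms the equivalence with the partition property when $\kappa=\aleph_n$.) For the negative direction (part~2), suppose $\kappa\le\mathfrak{c}$ with $\kappa\notin S$. The claim is that no Banach space of density $\kappa$ condenses onto a compact metric space. The mechanism is that of the proof of Theorem~\ref{th3}: a Banach space of density $\kappa$ is homeomorphic to $J(\kappa)^\omega$ (Theorem~5.1, Remark, Theorem~6.1 of~\cite{tur}), which is a continuous bijective image of $\kappa^\omega$ (by~\cite{pytk}); composing a hypothetical condensation onto a compact metric space $K$ with that bijection yields a condensation $h:\kappa^\omega\to K$; running the analytic-sets / property~$(\star)$ reduction exactly as in Theorem~\ref{th3} forces $\omega^\omega$ to be covered by $<\kappa$ Borel sets, and then---since $\kappa\notin S$ and $\kappa$ is below $\aleph_\omega$ or handled by Brian's Theorem~\ref{Th2} bound $\kappa<\aleph_\omega$---one derives that $\omega^\omega$ in fact partitions into $\kappa$ Borel sets, contradicting the defining feature of the~\cite{WB} model.

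The main obstacle is bookkeeping the interaction between the two forcing-theoretic ingredients: Theorem~\ref{th3} is proved in a Cohen-type model where one must know that the property~$(\star)$ (and the Proposition~\ref{prop}-style statement for the relevant $\mu$) survives, whereas the partition pattern of part~2 comes from the~\cite{WB} construction. One must check that the Brian--Miller/\cite{WB} forcing is of the right Cohen-like form so that both the Pytkeev/Kadets homeomorphism machinery and the reflection property~$(\star)$ (needed to push a $<\kappa$ cover of $\omega^\omega$ by Borel sets down to a partition into $\kappa$ closed sets) hold simultaneously in one and the same extension; this is exactly the content of combining Theorem~\ref{th3} with Theorem~\ref{Os} and the cited results of~\cite{WB}, and the delicate point is that the ``forbidden'' cardinals $\kappa\notin S$ must be precisely those for which the~\cite{WB} model provides \emph{no} partition of $\omega^\omega$ into $\kappa$ Borel sets, so that Theorem~\ref{Os}(2)$\Leftrightarrow$(3) (for $\kappa=\aleph_n$) and the Theorem~\ref{th3}-style argument (for the general intermediate case up to $\aleph_\omega$) jointly close off every condensation. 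I would therefore organize the write-up as: (i) fix the~\cite{WB} extension; (ii) part~1 via Theorem~\ref{thBP}; (iii) part~2 by contradiction, quoting the body of the proof of Theorem~\ref{th3} verbatim and then appealing to $\kappa\notin S$ together with Theorem~\ref{Os} and Theorem~\ref{Th2} to reach the contradiction.
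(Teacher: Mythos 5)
Your overall strategy matches the paper's (the paper offers no written proof, only the citation of Theorems~\ref{th3}, \ref{Os} and Theorem~3.2 of \cite{WB}, and your reconstruction of part~1 --- pass to the \cite{WB} extension, read off the partition spectrum, apply Theorem~\ref{thBP} --- is exactly what is intended and is correct). The issue is in your part~2. You propose to ``quote the body of the proof of Theorem~\ref{th3} verbatim,'' i.e.\ to run the analytic-set decomposition and then invoke the reflection property $(\star)$ and the Proposition~\ref{prop}-style statement. But those two ingredients were established only for the plain Cohen extension $FIN(\kappa,2)$ over a model of $GCH$; the extension of the present corollary is the different (and more elaborate) forcing of \cite{WB}, and you nowhere verify that $(\star)$ survives there. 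You flag this yourself as ``the main obstacle,'' but flagging it is not the same as closing it, and as written this is a genuine gap in your route.

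The point you are missing is that the detour through Theorem~\ref{th3} is unnecessary here, precisely because $\mathfrak{c}=\aleph_{\omega+1}$ in the \cite{WB} model: every ``forbidden'' cardinal $\kappa\notin\{\aleph_n: n\in A\}\cup\{\aleph_1,\aleph_\omega,\aleph_{\omega+1}\}$ with $\aleph_0<\kappa\leq\mathfrak{c}$ is of the form $\aleph_n$ with $2\leq n<\omega$, hence lies below $\aleph_\omega$ and falls under the hypotheses of Brian's ZFC result, Theorem~\ref{Th2}. So part~2 should be argued exactly as in the implication $(1)\Rightarrow(2)$ of Theorem~\ref{Os}: a condensation of a Banach space of density $\aleph_n$ onto a compactum $K$, composed with the condensation of $\omega_n^{\omega}$ onto $J(\aleph_n)^{\omega}$, yields via Theorem~\ref{Th2} a partition of $K$ into $\aleph_n$ Borel sets, and pulling back along a continuous surjection $\omega^{\omega}\to K$ gives a partition of $\omega^{\omega}$ into $\aleph_n$ Borel sets --- contradicting the defining feature of the \cite{WB} model that no such partition exists for $n\notin A$. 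This argument needs no covering-reflection property of the forcing at all, only the negative half of the partition spectrum, which is exactly what Theorem~3.2 of \cite{WB} supplies. (One small further point worth recording: Theorem~\ref{Os} as stated concerns condensations onto the Hilbert cube, whereas part~2 forbids condensations onto an arbitrary compactum $K$; the proof of $(1)\Rightarrow(2)$ goes through verbatim for arbitrary $K$, but you should say so rather than cite the theorem as a black box.)
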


\begin{corollary} Given any finite $A\subseteq \mathbb{N}$, there is a
forcing extension in which

1. Any Banach space $X$ of density $\kappa\in \{\aleph_n: n\in
A\}\cup \{\aleph_1\}$ condenses onto the Hilbert cube;

2. No Banach space $X$ of density $\kappa\not\in \{\aleph_n: n\in
A\}\cup \{\aleph_1\}$ condenses onto a compact metric space.

\end{corollary}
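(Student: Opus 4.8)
The plan is to combine the prescribed control of the closed-partition spectrum of $\omega^\omega$ supplied by Theorem 3.2 (with Corollaries 3.3 and 3.4) of \cite{WB} with the two translation results already available: Theorem \ref{Os}, which converts a partition of $\omega^\omega$ into $\aleph_n$ Borel sets into a condensation of the Banach space of density $\aleph_n$ onto the Hilbert cube (and conversely), and Theorem \ref{th3}, which annihilates every density strictly between a threshold $\mu$ of uncountable cofinality and $\mathfrak c$. Throughout write $N=\max(A\cup\{1\})$ and $S=\{\aleph_n:n\in A\cup\{1\}\}$; since $A$ is finite, $N<\omega$, every member of $S$ is of the form $\aleph_n$ with $n\le N$, and $\aleph_N=\max S$.

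First I would fix a countable transitive model $M\models \mathrm{ZFC}+\mathrm{GCH}$ and, applying \cite{WB}, pass to a Cohen-type c.c.c.\ extension $M[G]$ enjoying three simultaneous features: (a) $\mathfrak c=\aleph_\lambda$ for a prescribed large $\lambda$ of uncountable cofinality that is not the successor of a cardinal of countable cofinality, with $\lambda>N$; (b) for every $n<\omega$, the space $\omega^\omega$ partitions into $\aleph_n$ closed (equivalently Borel) sets if and only if $n\in A\cup\{0,1\}$; and (c) the reflection property $(\star)$ from the proof of Theorem \ref{th3} holds with threshold $\mu=\aleph_N$, which is legitimate because the forcing of \cite{WB} refines the Cohen forcing $FIN(\kappa,2)$ of \cite{BrMi} and therefore inherits the reflection of Proposition \ref{prop}. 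Feature (b) is the combinatorial core and is exactly what the finiteness of $A$ purchases in \cite{WB}.

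Statement (1) is then immediate. For $\kappa=\aleph_1$ every Banach space condenses onto the Hilbert cube by Theorem \ref{thBP1}. For $\kappa=\aleph_n$ with $n\in A$, feature (b) provides a partition of $\omega^\omega$ into $\aleph_n$ Borel sets, so the implication $(2)\Rightarrow(1)$ of Theorem \ref{Os} (equivalently Theorem \ref{thBP}) yields a condensation of the density-$\aleph_n$ Banach space onto $[0,1]^\omega$. For statement (2) I would split the bad densities $\kappa$ (uncountable, $\kappa<\mathfrak c$, $\kappa\notin S$) according to whether $\kappa\le\aleph_N$. If $\aleph_1<\kappa\le\aleph_N$ then $\kappa=\aleph_n$ for some $n<N$ with $n\notin A\cup\{1\}$, so by feature (b) the reals do not partition into $\aleph_n$ Borel sets; by the equivalence $(1)\Leftrightarrow(2)$ of Theorem \ref{Os} together with the homeomorphism of every density-$\aleph_n$ Banach space with $J(\aleph_n)^\omega$ (\cite{tur}), this space does not condense onto $[0,1]^\omega$, and Theorem \ref{Th2} upgrades this to the nonexistence of a condensation onto any compactum: a condensation onto a compact metric $K$ would, composed with Pytkeev's condensation $\omega_n^\omega\to J(\aleph_n)^\omega$ and followed by pulling a Borel decomposition of the Polish space $K$ back along a surjection $p:\omega^\omega\to K$, manufacture a partition of $\omega^\omega$ into $\aleph_n$ Borel sets, contradicting feature (b). If instead $\aleph_N<\kappa<\mathfrak c$, then $\aleph_N\in S$ is a good density of uncountable cofinality and features (b)--(c) place us exactly under the hypotheses used in the proof of Theorem \ref{th3} with $\mu=\aleph_N$; that argument applies verbatim in $M[G]$ and shows that no Banach space of density $\kappa$ condenses onto a compactum. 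Since every uncountable cardinal below $\mathfrak c$ either equals some $\aleph_n$ with $n\le N$ or satisfies $\aleph_N<\kappa<\mathfrak c$, the two cases exhaust statement (2); the densities $\aleph_1$ and $\mathfrak c$, good unconditionally by Theorem \ref{thBP1}, either lie in $S$ or sit on the boundary $\kappa=\mathfrak c$ and so are never claimed bad.

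The main obstacle is the coexistence of features (b) and (c) in a single extension: realizing the (in general non-monotone) prescribed finite spectrum $A\cup\{0,1\}$ below $\aleph_\omega$ while keeping the Brian--Miller reflection above $\aleph_N$ so that the tail $\aleph_N<\kappa<\mathfrak c$ collapses under Theorem \ref{th3}. This is precisely the content imported from Theorem 3.2 and Corollaries 3.3 and 3.4 of \cite{WB}, and it is the finiteness of $A$ that lets us locate the single threshold at $\aleph_N=\max S$ and reduce the entire tail to one application of Theorem \ref{th3}. The only remaining technical point, handled exactly as in the proofs of Theorems \ref{Os} and \ref{th3}, is the passage from ``no condensation onto the Hilbert cube'' to ``no condensation onto any compactum'' for the finitely many bad $\aleph_n<\aleph_N$, which rests on the $\Sigma^1_1$-decomposition of analytic sets (\cite{Kur1}) and on the density being a homeomorphism invariant of infinite-dimensional Banach spaces.
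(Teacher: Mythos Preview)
Your proposal is correct and follows exactly the route the paper indicates: the paper gives no argument beyond the one-line citation of Theorems~\ref{th3} and~\ref{Os} together with Theorem~3.2 and Corollaries~3.3--3.4 of~\cite{WB}, and your write-up is a faithful unpacking of how those ingredients fit together---\cite{WB} to prescribe the partition spectrum below $\aleph_\omega$, Theorem~\ref{Os} to convert that into the Banach-space dichotomy for each $\aleph_n$, and the reflection machinery from the proof of Theorem~\ref{th3} (with threshold $\mu=\aleph_N$) for the tail $\aleph_N<\kappa<\mathfrak c$. Your explicit identification of the coexistence of features~(b) and~(c) in a single extension as the crux, and its attribution to~\cite{WB}, is precisely the point the paper leaves implicit.
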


{\bf Acknowledgements.} I would like to thank William Brian and
Taras Banakh for several valuable comments.

\bibliographystyle{model1a-num-names}
\bibliography{<your-bib-database>}

\end{document}